\documentclass[11pt]{article}

\usepackage[a4paper,top=2.35cm,bottom=2.5cm,left=2.45cm,right=2.45cm,
            headheight=22pt,headsep=0.55cm]{geometry}
\usepackage[T1]{fontenc}
\usepackage[utf8]{inputenc}
\usepackage{amsmath,amssymb,amsthm,mathtools}
\usepackage{newtxtext}
\usepackage{newtxmath}
\usepackage[scaled=0.94]{sourcesanspro}
\usepackage{microtype}
\usepackage{bm}
\usepackage{mathrsfs}
\usepackage{graphicx}
\usepackage{subcaption}
\usepackage{booktabs}
\usepackage{array}
\usepackage{tabularx}
\usepackage{longtable}
\usepackage{enumitem}
\usepackage{xcolor}
\usepackage{titlesec}
\usepackage{fancyhdr}
\usepackage[most]{tcolorbox}
\usepackage{xurl}
\usepackage{hyperref}
\usepackage[nameinlink,noabbrev]{cleveref}
\usepackage[numbers,sort&compress]{natbib}
\usepackage{tikz}
\usetikzlibrary{arrows.meta,positioning,fit}

\definecolor{PaperNavy}{HTML}{17324D}
\definecolor{PaperTeal}{HTML}{1E6F72}
\definecolor{PaperInk}{HTML}{22272B}
\definecolor{PaperMuted}{HTML}{66727C}
\definecolor{PaperLine}{HTML}{D9E0E4}
\definecolor{PaperWash}{HTML}{F4F7F8}
\definecolor{PaperWarm}{HTML}{8A5A20}
\definecolor{PaperWarmWash}{HTML}{FBF7F1}

\color{PaperInk}
\setlist{nosep,leftmargin=2.0em,topsep=0.45em}
\setlist[enumerate,1]{label=\textcolor{PaperTeal}{\arabic*.}}
\setlist[itemize,1]{label=\textcolor{PaperTeal}{\raisebox{0.2ex}{\small$\bullet$}}}

\titleformat{\section}
  {\Large\sffamily\bfseries\color{PaperNavy}}
  {\textcolor{PaperTeal}{\thesection}}{0.72em}{}
  [\vspace{0.25em}\color{PaperLine}\titlerule]
\titleformat{\subsection}
  {\large\sffamily\bfseries\color{PaperNavy}}
  {\textcolor{PaperTeal}{\thesubsection}}{0.68em}{}
\titleformat{\subsubsection}
  {\normalsize\sffamily\bfseries\color{PaperNavy}}
  {\textcolor{PaperTeal}{\thesubsubsection}}{0.65em}{}
\titlespacing*{\section}{0pt}{2.0em}{0.8em}
\titlespacing*{\subsection}{0pt}{1.5em}{0.45em}
\titlespacing*{\subsubsection}{0pt}{1.1em}{0.35em}

\hypersetup{
  colorlinks=true,
  linkcolor=PaperTeal,
  citecolor=PaperTeal,
  urlcolor=PaperTeal,
  pdfborder={0 0 0},
  pdftitle={Scalable Persistence Pairing Graphs},
  pdfauthor={John Rick Manzanares}
}
\renewcommand{\headrulewidth}{0.35pt}
\renewcommand{\headrule}{\hbox to\headwidth{\color{PaperLine}\leaders\hrule height \headrulewidth\hfill}}

\newtheoremstyle{paperdefinition}
  {0.75em}{0.75em}{\normalfont}{}%
  {\sffamily\bfseries\color{PaperNavy}}{.}{0.55em}{}
\newtheoremstyle{paperplain}
  {0.75em}{0.75em}{\itshape}{}%
  {\sffamily\bfseries\color{PaperNavy}}{.}{0.55em}{}
\newtheoremstyle{paperremark}
  {0.65em}{0.65em}{\normalfont}{}%
  {\sffamily\bfseries\color{PaperNavy}}{.}{0.55em}{}

\theoremstyle{paperdefinition}
\newtheorem{definition}{Definition}[section]
\theoremstyle{paperplain}
\newtheorem{theorem}[definition]{Theorem}
\newtheorem{proposition}[definition]{Proposition}
\newtheorem{lemma}[definition]{Lemma}
\newtheorem{corollary}[definition]{Corollary}
\theoremstyle{paperremark}

\tcolorboxenvironment{definition}{
  enhanced,breakable,colback=PaperWash,colframe=PaperLine,
  boxrule=0.45pt,arc=1.2pt,left=9pt,right=9pt,top=6pt,bottom=6pt,
  borderline west={2pt}{0pt}{PaperTeal}
}
\tcolorboxenvironment{theorem}{
  enhanced,breakable,colback=white,colframe=PaperLine,
  boxrule=0.45pt,arc=1.2pt,left=9pt,right=9pt,top=6pt,bottom=6pt,
  borderline west={2pt}{0pt}{PaperNavy}
}
\tcolorboxenvironment{proposition}{
  enhanced,breakable,colback=white,colframe=PaperLine,
  boxrule=0.45pt,arc=1.2pt,left=9pt,right=9pt,top=6pt,bottom=6pt,
  borderline west={2pt}{0pt}{PaperNavy}
}
\tcolorboxenvironment{lemma}{
  enhanced,breakable,colback=white,colframe=PaperLine,
  boxrule=0.45pt,arc=1.2pt,left=9pt,right=9pt,top=6pt,bottom=6pt,
  borderline west={2pt}{0pt}{PaperNavy}
}
\tcolorboxenvironment{corollary}{
  enhanced,breakable,colback=white,colframe=PaperLine,
  boxrule=0.45pt,arc=1.2pt,left=9pt,right=9pt,top=6pt,bottom=6pt,
  borderline west={2pt}{0pt}{PaperNavy}
}

\newcommand{\Ftwo}{\mathbb F_2}
\newcommand{\R}{\mathbb R}

\newcommand{\low}{\operatorname{low}}
\newcommand{\im}{\operatorname{im}}
\newcommand{\coker}{\operatorname{coker}}
\newcommand{\supp}{\operatorname{supp}}

\newcommand{\Dying}{\mathcal D}
\newcommand{\Surv}{\mathcal S}

\makeatletter
\title{Scalable persistence pairing graphs on scalar fields with an application to porous media}
\author{John Rick Manzanares\textsuperscript{1,2,*}}
\date{7 October 2026}
\newcommand{\papertype}{Preprint draft}
\newcommand{\affiliations}{%
\begin{tabularx}{\linewidth}{@{}>{\raggedleft\arraybackslash\sffamily\bfseries\color{PaperTeal}}p{1.5em} X@{}}
1 & Institute of Mathematics of the Polish Academy of Sciences, 00-656 Warsaw, Poland\\[0.2em]
2 & International Environmental Doctoral School, University of Silesia in Katowice, 41-200 Sosnowiec, Poland
\end{tabularx}}
\newcommand{\correspondence}{John Rick Manzanares, \href{mailto:jdolormanzanares@impan.pl}{jdolormanzanares@impan.pl}}
\newcommand{\funding}{This project has received funding from the European Union's Horizon Europe research and innovation programme under the Marie Sk\l{}odowska-Curie Actions (MSCA), grant agreement No.~101120290 (GAP).}
\newcommand{\competinginterests}{The authors declare no competing interests.}
\newcommand{\keywords}{topological data analysis; persistent homology;
digital rock physics; signed distance transform; homological relations
}

\renewcommand{\maketitle}{%
  \thispagestyle{empty}
  \noindent
  \begin{minipage}[t]{0.69\textwidth}
    {\sffamily\footnotesize\bfseries\color{PaperTeal}\MakeUppercase{\papertype}}\\[0.75em]
    {\raggedright\fontsize{23}{27}\selectfont\sffamily\bfseries\color{PaperNavy}\@title\par}
    \vspace{0.85em}
    {\large\sffamily\color{PaperInk}\@author\par}
  \end{minipage}%
  \hfill
  \begin{minipage}[t]{0.24\textwidth}
    \raggedleft
    {\sffamily\footnotesize\color{PaperMuted}Version date\\[0.2em]
    \color{PaperInk}\@date}
  \end{minipage}

  \vspace{1.0em}
  \noindent{\color{PaperTeal}\rule{\textwidth}{1.15pt}}
  \vspace{0.95em}

  {\small\noindent\affiliations\par}
  \vspace{0.75em}

  \noindent\begin{tcolorbox}[
    enhanced,colback=PaperWash,colframe=PaperLine,boxrule=0.4pt,
    arc=1.2pt,left=10pt,right=10pt,top=8pt,bottom=8pt]
  \noindent\begin{tabularx}{\linewidth}{@{}>{\raggedright\arraybackslash\sffamily\bfseries\color{PaperNavy}}p{3.7cm} >{\raggedright\arraybackslash}X@{}}
  Correspondence & \correspondence\\[0.32em]
  Funding & \funding\\[0.32em]
  Competing interests & \competinginterests\\[0.32em]
  Keywords & \keywords
  \end{tabularx}
  \end{tcolorbox}
  \vspace{0.35em}
}
\makeatother

\begin{document}
\maketitle

\begin{abstract}
\noindent
We present a scalable method for computing persistence pairing graphs from large cubical filtrations and use it to test whether graph organization among persistent classes contains information beyond persistence intervals alone. The method cancels equal-value pairs before global reduction, constructs the compressed complex in streaming form, and recovers graph transitions from one persistence reduction through projected relations and triangular event solves. We apply the method to 119 signed distance volumes from the DRP-372 porous media collection for permeability prediction. Each 256-cubed volume induces more than 135 million cubical cells before contraction, whereas the median compressed complex contains 37,471 generators. In validation grouped by material family, adding persistence pairing graph descriptors improves prediction over persistence summaries under both Ridge regression and partial least squares regression. In the stricter experiment that holds out one source project at a time, Ridge retains a modest improvement over persistence alone, although additional benefit beyond geometry and persistence is not consistent across projects. Two null models show that the observed persistence and graph coupling and edge target organization are strongly nonrandom, but neither uniquely explains the predictive improvement; the useful signal appears to reside mainly in coarser graph and event organization. Spatial analyses likewise distinguish persistence magnitude from graph prominence and reveal nonlocal organization among algebraically related persistence events. These results show that persistence pairing graphs provide structurally informative summaries beyond the barcode while exposing domain dependence and representative sensitivity that motivate more invariant relational descriptors.
\end{abstract}

\section{Introduction}

Persistent homology summarizes how topological features appear and disappear across a filtration. In its standard interval representation, a class is described by its birth and death parameters, producing a persistence barcode or persistence diagram \cite{edelsbrunner2002topological,zomorodian2005computing,crawleyboevey2015decomposition}. This representation is deliberately economical. It records lifetime but generally discards how simultaneously active reduction-selected homology classes are related when one of them dies.

A persistence pairing graph (PPG) retains one additional layer. Its vertices are nonzero persistence intervals, while a directed edge records a nonzero transition coefficient from a dying selected class to a selected class that survives the same filtration event. The companion theory paper develops this construction, its algebraic interpretation, and its representative dependence \cite{manzanares2026PPG}. In higher homological dimensions, a PPG edge is a basis-transition relation; it should not be interpreted automatically as a geometric merger or as a physical channel in the underlying domain. The present paper addresses two questions left open by the theory: whether these relations can be computed at large cubical-image scale, and whether they carry measurable information in a physical application.

The computational problem differs from barcode-only persistence. High-performance persistence software can discard much of the change-of-basis information once persistence pairs are known. PHAT, Ripser, Cubical Ripser, distributed cubical approaches, GUDHI, and recent cubical methods exploit this freedom in different ways \cite{bauer2017phat,bauer2021ripser,kaji2020cubicalripser,bauer2014distributed,maria2014gudhi,lebreton2026flash}. PPG extraction instead requires enough reduction history to recover selected basis transitions after persistence pairing. Representative-aware factorizations such as U-match similarly illustrate that the storage requirements change when one wants more than a barcode \cite{hang2021umatch}.

Three-dimensional scalar images make this distinction costly. In the doubled-grid cubical representation used here, an $n_1\times n_2\times n_3$ array induces
\[
(2n_1+1)(2n_2+1)(2n_3+1)
\]
cubical cells. A $256^3$ scalar volume therefore corresponds to $513^3=135{,}005{,}697$ cells before compression. At this scale, materializing the full cubical complex, all boundaries, and a full change-of-basis matrix is not a viable route to graph extraction.

Our computational strategy uses a property of discrete image filtrations that is usually viewed as a nuisance: many cells share exactly the same filtration value. A deterministic cellwise refinement of these ties creates large numbers of zero-length persistence pairs. Those pairs are absent from the positive-length PPG, yet a direct global reduction may still spend most of its work constructing and propagating their algebra. We therefore remove same value algebra first. The cancellation is filtered, so the persistence module is preserved; after this step, a much smaller filtered complex remains for the global persistence and PPG computation.

The optimized implementation is built around this separation. Cubical boundaries are generated on demand. Same-value blocks are contracted in a deterministic order. The compressed complex uses compact filtered-cell records and flat $32$-bit sparse arrays. The global persistence reduction stores only the reduced information and elementary operations required later. Transition systems are formed only on labels that actually occur in a death event. Common short relations are encoded directly, while longer relations share flat arenas. Finally, the discrete filtration order is stored in one delta-coded byte arena whose already-consumed pages can be reclaimed on Linux. These changes reduce the peak resident memory of our full benchmark by more than an order of magnitude while preserving the deterministic value block graph exactly in end-to-end regression tests.

Scalability matters here because the scientific test requires many large volumes rather than one demonstration image. We use the public DRP-372 collection of digital porous media \cite{santos2022drp372}. Previous work has shown that persistent homology and related topological constructions can characterize pore connectivity, percolating length scales, pore heterogeneity, capillary trapping, and topology-preserving pore network structure \cite{robins2016porous,jiang2018pore,herring2019trapping,zubov2022pore}. The question is narrower: after persistence intervals have already been summarized, does the relational information retained by the PPG improve held out prediction of permeability?

We design the application as an ablation study. Geometry, persistence intervals, and graph relations are separated into nested feature sets. Validation is grouped first by a curated material/source family and then, more stringently, by complete source project. This hierarchy is important because the dataset contains related samples and augmented geometries; random specimen-level splitting would risk measuring source similarity rather than transferable relationships between structure and properties. We also treat later coefficient, domain shift, and spatial localization analyses as descriptive interpretation rather than as additional feature selection rounds.

The contributions of this paper are as follows:
\begin{enumerate}
  \item We give a filtered same value cancellation argument for the exact tie removal step used by the computation and derive the generator count identity $2N_{\mathrm{fin}}+N_{\mathrm{ess}}$ once the differential strictly lowers filtration value.
  \item We give a deterministic value block construction and a streaming cubical implementation that produces the same compressed chain complex as the materialized block algorithm under the same order and pivot rule.
  \item We recover the persistence pairing graph from one compact persistence reduction. At each death value, the dying coordinate block is unit lower triangular, so transition coefficients are obtained by forward substitution rather than a general matrix inversion.
  \item We compute PPGs for a cohort of $119$ DRP-372 $256^3$ porous media volumes and evaluate geometry, persistence, and PPG feature sets under nested family grouped validation and a stricter leave one project out stress test.
  \item We show that coarse PPG descriptors improve persistence only models, while the improvement beyond geometry plus persistence is domain dependent. Two null models further show that the exact persistence to graph assignment and the exact surviving targets are strongly structured but are not uniquely responsible for the predictive gain. Spatial analyses then examine how graph prominence relates to the locations of persistence critical cells.
\end{enumerate}

Several ideas used by the implementation have close relatives in earlier persistent homology computation. Standard matrix reduction and persistence pairing go back to the foundational algorithms \cite{edelsbrunner2002topological,zomorodian2005computing}. Clearing avoids reductions known in advance to be unnecessary \cite{chenkerber2011twist}. PHAT separates reduction strategies from matrix data structures \cite{bauer2017phat}. Ripser avoids explicit storage of large filtration matrices and exploits ordering structure aggressively \cite{bauer2021ripser}. Cubical-specific methods avoid triangulation and use implicit or compact incidence calculations \cite{wagner2012cubical,kaji2020cubicalripser}. Distributed persistence addresses memory limits across machines \cite{bauer2014distributed}, while discrete Morse and algebraic reductions remove acyclic structure before persistence computation \cite{robins2011morse,dlotko2012morse}. U-match is particularly relevant to the present setting because it is designed to support later representative recovery from a compressed factorization \cite{hang2021umatch}.

Flash Cubical, introduced during the development of this project, is a recent high performance method for barcode computation on regular 2D and 3D cubical V-filtrations over $\Ftwo$. It combines duality, union--find, local pruning, and lookup tables \cite{lebreton2026flash}. Its computational target differs from ours: methods that compute only barcodes can discard information that PPG transition recovery still needs. We therefore do not interpret the benchmark progression in this paper as a speed comparison against Flash Cubical, Cubical Ripser, or other barcode-only libraries.

The same value cancellation argument is likewise not presented as the invention of algebraic cancellation. Its role is to prove filtered exactness for the specific tie-removal step used by the deterministic graph pipeline. On-demand boundaries, compact integers, bitsets, CSR-like arrays, flat arenas, delta coding, and page reclamation are established algorithm-engineering techniques. The computational contribution is their integration around an unusual output requirement: remove as much zero-length work as possible while retaining enough exact reduction information to recover the selected PPG relations.

Persistent homology has a substantial history in porous media analysis. Robins et al.\ related topological persistence of Euclidean-distance filtrations to percolating length scales and pore connectivity in micro-CT images \cite{robins2016porous}. Jiang et al.\ used persistence diagrams to characterize pore heterogeneity \cite{jiang2018pore}. Herring et al.\ related persistent topology to microstructure and capillary trapping in sandstones \cite{herring2019trapping}. Discrete Morse constructions have also been used to extract pore networks while explicitly targeting preservation of pore-space topology \cite{zubov2022pore}. Our application differs by asking whether graph relations among persistent classes add predictive and spatial information beyond persistence summaries themselves.

\section{Preliminaries}
\label{sec:background}

\subsection{Filtered Chain Complex}

We work over the field $\Ftwo=\{0,1\}$, where addition and subtraction are both addition modulo two. This choice removes orientation signs from the implementation and turns addition of sparse chains into a symmetric difference of their supports.

\begin{definition}
A \emph{finite chain complex over $\Ftwo$} is a sequence of finite-dimensional vector spaces and linear maps
\[
\cdots\longrightarrow C_{q+1} \xrightarrow{\partial_{q+1}} C_q \xrightarrow{\partial_q} C_{q-1} \longrightarrow\cdots
\]
such that $\partial_q\partial_{q+1}=0$ for every $q$.
\end{definition}

The elements of $C_q$ are called \emph{$q$-chains}. A chain $z\in C_q$ is a \emph{cycle} when $\partial_qz=0$. A chain is a \emph{boundary} when it equals $\partial_{q+1}c$ for some $c\in C_{q+1}$. Because $\partial^2=0$, every boundary is a cycle. The \emph{quotient vector space}
\[
H_q(C):=\ker\partial_q/\im\partial_{q+1}
\]
is the $q$th homology of the chain complex. These standard chain complex and homology definitions follow, for example, \cite[Chapter~2]{hatcher2002algebraic}.

\begin{definition}
A \emph{filtered chain complex} is a finite chain complex $C_*$ together with a chosen basis $\mathcal E$ and a map $f:\mathcal E\to\R$ such that every basis element appearing with nonzero coefficient in $\partial e$ has filtration value at most $f(e)$.
\end{definition}

For $t\in\R$, let $C_*^t$ be the span of all basis elements $e$ with $f(e)\le t$. The filtration condition implies
\[
\partial(C_q^t)\subseteq C_{q-1}^t,
\]
so $C_*^t$ is itself a chain complex. If $s\le t$, inclusion gives a linear map
\[
H_q(C^s)\longrightarrow H_q(C^t).
\]
The collection of these homology vector spaces and maps is the persistence module of the filtered chain complex \cite{zomorodian2005computing,edelsbrunner_harer_2010}. Over a field, a finite one-parameter persistence module decomposes into interval modules, giving the usual persistence barcode \cite{crawleyboevey2015decomposition}.

For a filtration value $t$, we write $C_*^{t^-}$ for the subcomplex generated by basis elements with filtration value strictly less than $t$.

\subsection{Filtered Cubical Complex}

Consider a $d$-dimensional scalar array (or scalar field)
\[
A:\{0,\ldots,n_1-1\}\times\cdots\times\{0,\ldots,n_d-1\}\to \R.
\]
If the scalar values are stored in a $k$-bit integer image, the codomain is $\left\{0,\ldots,2^k-1\right\}$.

The implementation uses a doubled regular grid with side lengths
\[
2n_1+1,\ldots,2n_d+1.
\]
A grid position with exactly $q$ odd coordinates represents a $q$-dimensional cubical cell. Thus in three dimensions, positions with zero, one, two, or three odd coordinates represent vertices, edges, squares, or voxels. The total number of cells is
\begin{equation}
\label{eq:cubical-cell-count}
N=\prod_{r=1}^d(2n_r+1).
\end{equation}

A cell has two faces along each odd coordinate. In the row-major one-dimensional grid index, these faces are obtained by subtracting or adding the stride of that coordinate. Hence, a $q$-cell has exactly $2q$ codimension-one faces before boundary clipping, and in this doubled representation the valid cell locations make these face steps explicit.

The scalar values are given on top-dimensional cells. For a lower-dimensional cell $\sigma$, we set
\begin{equation}
\label{eq:top-cell-extension}
f(\sigma)
:=
\min\{A(v): \sigma\subseteq\overline v,\ v\text{ a top-dimensional cell}\}.
\end{equation}
This rule guarantees that every face enters no later than its cofaces. It therefore defines a filtered cubical chain complex. Cubical persistent homology avoids the extra cells introduced by triangulating every voxel and is standard for image data \cite{wagner2012cubical,kaji2020cubicalripser}.

\subsection{Persistence Pairing Graph}

Let $c(\sigma)=(c_1,\ldots,c_d)$ be the doubled grid coordinate of a cell $\sigma$, and write $m_r=2n_r+1$ for the side length of the doubled grid in coordinate $r$. We define
\begin{equation}
\label{eq:grid-index}
\operatorname{gridindex}(\sigma)
:=
\sum_{r=1}^{d} c_r\prod_{s=r+1}^{d}m_s,
\end{equation}
where an empty product equals one. Thus $\operatorname{gridindex}(\sigma)$ is the zero-based row-major index of the doubled grid position, with the last coordinate varying fastest.

Order all cells by the lexicographic key
\begin{equation}
\label{eq:cell-order}
\bigl(f(\sigma),\ \dim\sigma,\ \operatorname{gridindex}(\sigma)\bigr).
\end{equation}
The dimension tie-break ensures that a face appears before a coface when both have the same scalar value. Let
\[
\sigma_1\prec\sigma_2\prec\cdots\prec\sigma_N
\]
be this total order.

The boundary matrix $D$ has one row and one column for every cell. Column $j$ contains the boundary of $\sigma_j$. Because every face is earlier than its coface, every nonzero entry of column $j$ lies in a row $i<j$.

For a nonzero column $c$, we define
\[
\low(c):=\max\supp(c),
\]
as the largest row index whose coefficient is one. Standard persistence reduction scans columns from left to right. Whenever two nonzero columns $R_i$ and $R_j$ have the same lowest row and $i<j$, replace
\[
R_j\leftarrow R_j+R_i.
\]
Over $\Ftwo$, this is a bitwise XOR of the supports. Reduction stops when every nonzero column has a distinct lowest row. A zero reduced column is called \emph{positive}; it creates a homology class. A nonzero reduced column is called \emph{negative}; if $\low(R_j)=i$, then positive cell $i$ and negative cell $j$ form a persistence pair \cite{edelsbrunner2002topological,zomorodian2005computing,edelsbrunner_harer_2010}.

If the same column additions are also applied to an identity matrix, then
\begin{equation}
\label{eq:rdv}
R=DV,
\end{equation}
where $V$ is upper unitriangular. The positive column $V_i$ is a selected cycle representative for the birth label $i$.

The theory of persistence pairing graphs is developed in the companion manuscript \cite{manzanares2026PPG}. Here, we use only the part needed for computation.

Fix a homological dimension $q$. A graph vertex corresponds to either a finite persistence interval of positive length or an essential persistence interval and keeps its birth and death values. Write $b(u)$ and $d(u)$ for the birth and death values of a vertex $u$, with $d(u)=+\infty$ for an essential interval. At an original filtration value $t$, define
\[
\Dying_t:=\{u:d(u)=t\},
\qquad
\Surv_t:=\{v:b(v)<t<d(v)\}.
\]
Thus $\Dying_t$ contains the labels that die at $t$, while $\Surv_t$ contains the labels already present before the event and still present after it.

Immediately before $t$, the selected active birth cycles form a homology basis. Each dying selected basis element can therefore be expressed, after the transition, in the basis formed by the surviving selected cycles. If
\[
[z_u]\longmapsto
\sum_{v\in\Surv_t} w_{uv}[z_v],
\]
then the graph contains a directed edge $u\to v$ exactly when $w_{uv}\ne0$. Over $\Ftwo$, each nonzero coefficient equals one.

A conceptually simple implementation of the persistence pairing graph performs three large tasks: building the cubical complex, reducing the boundary matrix, and extracting the graph. The algorithm is poor for large volumes because the number of cells grows according to \cref{eq:cubical-cell-count}, while a full sparse change-of-basis matrix can become much larger than the final barcode or graph.

There are two standard ideas in the persistent homology literature that already point toward a better design. First, cubical boundaries can be represented implicitly rather than by storing every boundary column \cite{wagner2012cubical,kaji2020cubicalripser}. Second, we can avoid storing representative information that can be recovered later; this principle appears in different forms in Ripser and U-match factorization \cite{bauer2021ripser,hang2021umatch}. As part of the graph computation, we adopt both ideas, but we must ensure that enough algebra is retained to recover transition relations.

The largest additional cost on quantized images comes from repeated scalar values. The total order \cref{eq:cell-order} refines a filtration value into many cellwise steps. Standard reduction can therefore create a pair $(i,j)$ with
\[
f(\sigma_i)=f(\sigma_j).
\]
This interval has length zero at the original scalar scale.

These pairs can dominate the cellwise algebra. In the DRP-372 cohort analyzed below, each $256^3$ scalar volume induces $135{,}005{,}697$ cubical cells before contraction, whereas the median compressed complex after cancellation within equal filtration values contains only $37{,}471$ generators. This large reduction motivates eliminating zero length pairs before the global persistence reduction.

\section{Efficient Implementation}
\label{sec:cancellation}

The mathematical core of the compression is an elementary cancellation in which the two removed generators have the same filtration value. The proof below is stated for a general filtered based chain complex and then applied to cubical data.

\subsection{Zero-Length Intervals}

\begin{lemma}
\label{lem:same value-cancellation}
Let $C_*$ be a finite filtered based chain complex over $\Ftwo$. Let $x\in C_q$ and $y\in C_{q+1}$ be basis elements with
\[
f(x)=f(y)=t,
\]
and suppose the coefficient of $x$ in $\partial y$ equals one. Then there exists a filtered based chain complex $C'_*$ obtained by removing $x$ and $y$ such that
\[
C_*\cong C'_*\oplus A_*
\]
by a filtration-preserving chain isomorphism, where $A_*$ is the two-generator chain complex
\[
0\longrightarrow \Ftwo y
\xrightarrow{\partial} \Ftwo x'
\longrightarrow0
\]
given $\partial y=x'$. Consequently, $C_*$ and $C'_*$ have naturally isomorphic persistence modules.
\end{lemma}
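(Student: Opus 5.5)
We prove the statement by an explicit change of basis: the standard algebraic cancellation (Gaussian elimination) of a unit entry of the differential, checked to respect the filtration because $f(x)=f(y)$.

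\textbf{Step 1: the new basis.} Put $x':=\partial y$. Since the coefficient of $x$ in $\partial y$ is one, replacing $x$ by $x'$ is an invertible (unitriangular) change of basis of $C_q$. The new element $x'$ lies in $C_q^t$, because every face of $y$ has value at most $f(y)=t$. Conversely, $x = x' + (\partial y - x)$ is also in $C_q^t$. Hence for every $s$ the span of basis elements of value $\le s$ is unchanged, provided $x'$ is assigned the value $t$.

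\textbf{Step 2: split off $A_*$.} For every basis element $z\neq y$ in degree $q+1$ whose boundary involves $x$, replace $z$ by $\tilde z:=z+y$ when $f(z)\ge t$. For every basis element $w$ in degree $q-1$ nothing changes.

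We then need $C'_*$ to be spanned by the remaining generators, and the boundaries of these generators must avoid $x'$ and $y$. Concretely, set $C'_{q+1}=\operatorname{span}\{\tilde z\}$ and $C'_{q}=\operatorname{span}(\text{basis}\setminus\{x\})$. Define $\partial'$ as $\pi\circ\partial$, where $\pi$ is the projection killing the $x'$-coordinate, and use on $C'_q$ the map $\partial$ restricted to the span of the other generators, minus its component along $y$.

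The cleaner route is to write the isomorphism as the usual formulas
\[
\partial'(z)=\partial z+\langle \partial z,x\rangle\,\partial y,\qquad
\phi(c)=c+\langle c,x\rangle\,y\text{-correction},
\]
and to verify three things. First, $\partial'^2=0$. Second, $C_*\cong C'_*\oplus A_*$ via $z\mapsto z+\langle\partial z,x\rangle y$ in degree $q+1$ and $c\mapsto c - \langle c,x\rangle x'$ on the complement in degree $q$. Third, $\partial'$ on degree $q$ is the original $\partial$ (note that $\partial x'=0$).

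\textbf{Step 3: filtration preservation.} Each correction term involves only $y$ or $\partial y$. We use it only on a generator $z$ whose boundary contains $x$, so that $f(z)\ge f(x)=t=f(y)$; the correction therefore never raises the filtration value. Likewise, $\partial'(z)$ has support among faces of $z$ and faces of $y$, all of value $\le f(z)$. So $C'_*$, with $f$ restricted to it, is a filtered based complex, and the isomorphism together with its inverse (the same formulas, by involutivity over $\Ftwo$) maps $C^s$ onto $(C'\oplus A)^s$ for each $s$.

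\textbf{Step 4: persistence.} $A_*$ is acyclic at every level $s$: for $s<t$ it is zero, and for $s\ge t$ the differential $y\mapsto x'$ is an isomorphism. Hence
\[
H(C^s)\cong H(C'^s)\oplus H(A^s)=H(C'^s),
\]
naturally in $s$, because the isomorphism commutes with the inclusions.

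The main obstacle is Step 2/3: verifying that the corrected boundary $\partial'$ stays within the filtration and squares to zero. The key fact for the filtration is the equality $f(x)=f(y)$, which makes the zigzag correction through $y$ value-nonincreasing; $\partial'^2=0$ is then obtained from $\partial^2=0$ by expanding the formula.
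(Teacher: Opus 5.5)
Your proposal is correct and follows essentially the same route as the paper: replace $x$ by $x'=\partial y$, replace each $z$ with $x$ in $\partial z$ by $z+y$, use $f(z)\ge f(x)=f(y)=t$ to see that no filtered subspace changes, and conclude from the levelwise acyclicity of $A_*$. The one verification you leave as ``expanding the formula'' is the one the paper spells out. Your Step~2 attributes it to $C'_q$ where $C'_{q+2}$ is meant. For $w\in C_{q+2}$, the $y$-coordinate of $\partial w$ in the new basis equals the $x'$-coordinate of $\partial^2 w=0$, so $\operatorname{span}\{y,x'\}$ really is a direct-summand subcomplex.
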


\begin{proof}
Suppose $\partial y=x+a$, where $x$ does not occur in $a$. We replace the basis element $x$ of $C_q$ by
\[
x':=\partial y=x+a.
\]
This change of basis is invertible because the coefficient of $x$ is one. This change of basis also preserves the filtration. For each $s$, let $F_sC_q$ denote the span of basis elements with filtration value at most $s$. Since every term of $a$ occurs in $\partial y$, it has filtration value at most $f(y)=t$. Hence, both
\[
x'=x+a \qquad\text{and}\qquad x=x'+a
\]
involve only elements of filtration value at most $t$. Thus, for $s < t$, neither $x$ nor $x'$ is present, while for $s\ge t$, replacing $x$ by $x'$ does not change the span. Therefore, every filtered subspace $F_sC_q$ is unchanged.

For the new basis, we have $\partial y=x'$. Consider any other basis element $z\in C_{q+1}$ whose boundary $\partial z$ contains $x'$. If we replace $z$ by $z+y$, then $x'$ is removed from $\partial z$. This change of basis also preserves the filtration. Since $x'$ occurs in $\partial z$, we have
\[
f(x')\le f(z).
\]
As $f(x')=f(y)=t$, it follows that $f(y)\le f(z)$. Thus $y$ is already present whenever $z$ is present, so replacing $z$ by $z+y$ does not change any filtered subspace. We can repeat the process for all such $z$. This ensures that $x'$ appears in the boundary of $y$ and in no other boundary of a $(q+1)$-chain. 

Next, we show that $y$ cannot appear in the boundary of a basis element $w\in C_{q+2}$. If $y$ appears in the boundary of $w \in C_{q+2}$, then the coefficient of $x'$ in $\partial^2w$ would be one, because $\partial y=x'$ and no other basis element of $C_{q+1}$ has a boundary containing $x'$. Hence, 
\[
\partial^2w \neq 0.
\]
This contradicts the property $\partial^2=0$.

We have now shown that the span of $y$ and $x'$ is a direct filtered subcomplex $A_*$. The subcomplex $A_*$ has trivial homology. Indeed, since
\[
\partial y=x'\neq 0,
\]
the element $y$ is not a cycle, while $x'$ is a cycle but also a boundary, namely $x'=\partial y$. Hence, $A_*$ has no nontrivial homology classes.

Since $A_*$ is isolated from the remaining generators, the transformed chain complex splits as
\[
C_*\cong C'_*\oplus A_*,
\]
where $C'_*$ is obtained by omitting the two generators $y$ and $x'$. Thus, $A_*$ may be removed as a contractible filtered summand.

For $s<t$, the summand $A_*$ is absent from the filtered complex, while for $s\ge t$, it is present but has trivial homology. Hence, at every filtration value,
\[
H_k(F_sC_*)\cong H_k(F_sC'_*).
\]
These isomorphisms are compatible with the inclusion maps between filtration levels, so $C_*$ and $C'_*$ define naturally isomorphic persistence modules.
\end{proof}

A \emph{value block} is the set of all basis cells having the same filtration value $t$. For each filtration value $t$, process all cells with $f(\sigma)=t$ in increasing dimension and, within each dimension, increasing grid index. 

When a cell $\sigma$ is processed, we generate its boundary $\partial\sigma$ and rewrite it using the replacement relations of cells that have already been cancelled. After these substitutions, the boundary contains only generators that remain in the compressed complex and uncancelled generators from the current value block.

If the largest remaining boundary element $x$ satisfies
\[
f(x)=f(\sigma)=t,
\]
then $x$ and $\sigma$ form a same value pair. By Lemma~\ref{lem:same value-cancellation}, this pair may be cancelled without changing the persistence module. The pair is removed, and a replacement relation for $x$ is stored so that any later occurrence of $x$ can be rewritten in terms of the remaining generators.

If the rewritten boundary is empty, or if its largest remaining element has filtration value strictly smaller than $t$, then no same value cancellation is possible. In this case, $\sigma$ is retained as a generator of the compressed complex together with its rewritten boundary. After all cells of value $t$ have been processed, the relations obtained within that value block are fixed and the algorithm proceeds to the next filtration value.

We use
\[
x\rightsquigarrow a
\]
as notation for a stored rewrite rule: after $x$ has been cancelled, every later occurrence of $x$ is replaced by the chain $a$. The symbol $\rightsquigarrow$ is algorithmic notation for substitution and does not denote equality in the original chain complex.

For example, suppose $f(a)=1$, $f(x)=f(y)=3$, and $\partial y=x+a$. When $y$ is processed, $x$ is the largest remaining boundary element and has the same filtration value as $y$. Hence, $x$ and $y$ are cancelled, and the rewrite rule $x\rightsquigarrow a$ is stored.

Now let $z$ be a later cell with
\[
\partial z=x+b.
\]
Since $x$ has already been cancelled, substituting the stored rule rewrites this boundary as
\[
a+b.
\]
Thus, later boundaries can be expressed entirely in terms of generators that remain in the compressed complex.

Because the projected boundary is stored in the same order used to process the current value block, its last (still-)uncancelled term of value $t$ is precisely the largest available same value row. The implementation therefore selects this term directly as the pivot, without separately searching all rows of the current block.

\begin{theorem}
\label{thm:block-preserves-persistence}
The final filtered chain complex obtained after all same value cancellations has a persistence module naturally isomorphic to that of the original filtered chain complex.
\end{theorem}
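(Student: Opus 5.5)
The plan is induction on the number of cancellations performed by the value block algorithm, applying Lemma~\ref{lem:same value-cancellation} once per cancelled pair. Let $C^{(0)}_*=C_*$ be the original filtered cubical complex, and let $C^{(k)}_*$ be the complex after the first $k$ cancellations, taken in processing order. We maintain the following invariant: $C^{(k)}_*$ is a filtered based chain complex whose generators are the uncancelled cells. The filtration values are inherited from the original cells. The boundary of each generator is the \emph{rewritten} boundary obtained by applying the stored rules $x\rightsquigarrow a$. If each step is an application of the lemma, then the persistence modules of $C^{(k)}_*$ and $C^{(k+1)}_*$ are naturally isomorphic. Composing these finitely many isomorphisms gives the theorem. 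Naturality is preserved under composition because each isomorphism commutes with the maps induced by inclusions $F_s\subseteq F_{s'}$.

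The key step is to check that one algorithmic cancellation is exactly the operation in the proof of the lemma. Suppose that when $\sigma$ with $f(\sigma)=t$ is processed, its rewritten boundary is $\partial'\sigma=x+a$, where $x$ is the largest remaining term and $f(x)=t$. The lemma, applied to $C^{(k)}_*$ with $y=\sigma$, replaces $x$ by $x'=\partial'\sigma$. It then replaces every other $z$ whose boundary contains $x'$ by $z+\sigma$, which replaces $x$ in $\partial z$ by $a$. The resulting complex $C'_*$ therefore has the following boundaries: each boundary that contained $x$ now contains $a$ in its place, and $x,\sigma$ are deleted. This is precisely the substitution rule $x\rightsquigarrow a$ applied to later boundaries. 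We must also check that the rule applies to all relevant boundaries, and here the processing order matters. Any cell $z$ whose boundary contains $x$ has $f(z)\ge t$, and it is either unprocessed or already retained. If $z$ had already been retained in the same block, then when $z$ was processed its rewritten boundary contained $x$, of value $t$. Hence $z$ would have been paired with some same value term rather than retained, since retention requires the largest remaining element to have value $<t$, and $x$ of value $t$ exceeds any such element. So only unprocessed cells contain $x$, and they are rewritten when they are processed. Note also that $\sigma$ itself never appears in a later boundary, by the $\partial^2=0$ argument of the lemma.

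The remaining points are routine. First, the rewritten boundaries still satisfy the filtration condition, because every term of $a$ has value at most $t\le f(z)$. Second, $\partial^2=0$ holds in $C^{(k+1)}_*$, because it is isomorphic to a direct summand of a chain complex. Third, the algorithm only ever pivots on pairs with equal values and coefficient one; over $\Ftwo$ any nonzero coefficient is one. I expect the main obstacle to be the bookkeeping in the key step: the algorithm processes cells lazily, rewriting a boundary only when the cell is visited, whereas the lemma rewrites the whole complex eagerly. I would handle this by defining $C^{(k)}_*$ with \emph{all} uncancelled cells, processed or not, carrying fully rewritten boundaries. I would then observe that rewriting at visit time produces the same chains, since the substitutions $x\rightsquigarrow a$ compose in the order they were created.
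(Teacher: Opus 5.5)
Your proposal is correct and takes the same route as the paper: each algorithmic cancellation is an application of Lemma~\ref{lem:same value-cancellation} to the current transformed complex, and the finitely many resulting filtered isomorphisms are composed. You also verify a step the paper's proof takes for granted when it speaks of ``the current transformed complex'': that lazy application of the rules $x\rightsquigarrow a$, together with deleting $\sigma$, reproduces exactly the lemma's basis change $z\mapsto z+\sigma$, and that no already-retained cell can still contain $x$.
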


\begin{proof}
At each step, the algorithm cancels two basis elements of the same filtration value whose incidence coefficient is nonzero in the current transformed complex. By \cref{lem:same value-cancellation}, removing such a pair does not change the persistence module.

Since the chain complex is finite and each cancellation removes two generators, only finitely many cancellations occur. Composing the persistence-module isomorphisms obtained at each step gives a natural isomorphism between the persistence module of the original filtered chain complex and that of the final compressed complex.
\end{proof}

\begin{corollary}
In every homological dimension, the positive-length and essential persistence intervals of the compressed complex are exactly those of the original filtered complex, with the same multiplicities.
\end{corollary}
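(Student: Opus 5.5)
The plan is to derive the corollary directly from \cref{thm:block-preserves-persistence}, using the uniqueness of interval decompositions for finite one-parameter persistence modules over a field \cite{crawleyboevey2015decomposition}. Fix a homological dimension $q$. The theorem gives a natural isomorphism $H_q(C^t)\cong H_q(C'^t)$ for every $t\in\R$, compatible with the inclusion-induced maps $H_q(C^s)\to H_q(C^t)$ for $s\le t$. The two persistence modules indexed by $\R$ are therefore isomorphic. Both are finite: they have finitely many critical values and finite-dimensional fibres. By the Krull--Schmidt-type uniqueness of the interval decomposition, the two modules have the same multiset of intervals $[b,d)$, where $d=+\infty$ is allowed.

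The step needing care is the translation between ``intervals of the persistence module indexed by real filtration values'' and ``pairs produced by cellwise reduction.'' I would argue as follows. Intervals are read off at the original scalar scale, with birth $f(\sigma_i)$ and death $f(\sigma_j)$. A cellwise pair $(i,j)$ with $f(\sigma_i)=f(\sigma_j)$ then contributes an empty interval $[t,t)$, which is not a summand of the $\R$-indexed module. The positive-length finite intervals and the essential intervals are therefore exactly the nonzero interval summands, with their multiplicities. This holds for both $C_*$ and the compressed complex $C'_*$, since the standard pairing theorem \cite{edelsbrunner2002topological,zomorodian2005computing} applies to any filtered based complex refined to a compatible total order. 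For $C'_*$, I would use the order inherited from \cref{eq:cell-order}; the filtration condition holds because the rewritten boundaries involve only generators of value at most $f(\sigma)$.

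Combining these two points gives the statement: the multiset of nonempty intervals is an isomorphism invariant, and isomorphic modules yield identical lists of positive-length and essential intervals in every dimension. I expect the main obstacle to be the justification that the compressed complex is a genuine filtered based complex whose cellwise reduction still computes its module's intervals. I would settle this by checking that each rewrite $x\rightsquigarrow a$ replaces $x$ by a chain of value at most $f(x)$. That keeps the boundaries filtered, and \cref{lem:same value-cancellation} already ensures the result is a chain complex.
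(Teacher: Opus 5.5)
Your proposal is correct and is essentially the paper's own argument, which the paper leaves implicit: the corollary is stated without a separate proof because \cref{thm:block-preserves-persistence} gives isomorphic persistence modules, and the multiset of positive-length and essential intervals is an isomorphism invariant by uniqueness of the interval decomposition \cite{crawleyboevey2015decomposition}. Your extra bookkeeping is sound and only makes explicit what the paper leaves tacit: equal-value cellwise pairs collapse to empty intervals at the scalar scale, and the compressed complex with the inherited order is a genuine filtered based complex, which the cancellation lemma already supplies.
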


After all same value cancellations have been performed, every nonzero boundary term of a surviving generator has strictly smaller filtration value than that generator. This observation yields an exact count of the generators in the compressed complex.

For a linear map $T:U\to V$, its \emph{cokernel} is the quotient vector space
\[
\coker T:=V/\im T.
\]
For a filtration value $t$, let $i_t^q:H_q(C^{t^-})\longrightarrow H_q(C^t)$ be the inclusion induced map, and define
\[
b_q(t):=\dim\coker i_t^q,
\qquad
d_q(t):=\dim\ker i_t^q.
\]
Here, $b_q(t)$ counts, including multiplicity, dimension-$q$ births at $t$, while $d_q(t)$ counts dimension-$q$ deaths at $t$.

\begin{theorem}
\label{thm:generator-count}
Suppose a finite filtered based chain complex has the property that the boundary of every basis generator has strictly smaller filtration value. Let $c_q(t)$ be the number of dimension-$q$ basis generators with filtration value $t$. Then
\begin{equation}
\label{eq:relative-generator-count}
c_q(t)=b_q(t)+d_{q-1}(t).
\end{equation}
If the barcode has $N_{\mathrm{fin}}$ finite positive-length intervals and $N_{\mathrm{ess}}$ essential intervals, then the total number of basis generators is
\begin{equation}
\label{eq:two-fin-plus-ess}
\boxed{2N_{\mathrm{fin}}+N_{\mathrm{ess}}}.
\end{equation}
\end{theorem}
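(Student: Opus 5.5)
The plan is to compare the chain complexes $C^{t^-}\subseteq C^t$ directly and count dimensions, using the hypothesis that every boundary term of a generator at value $t$ lies in $C^{t^-}$.

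\textbf{Step 1: the relative complex has zero differential.} Set $Q_*:=C^t_*/C^{t^-}_*$. It has basis the generators of value $t$, so $\dim Q_q=c_q(t)$. By hypothesis the boundary of each such generator lies in $C^{t^-}$. Hence the induced differential on $Q_*$ vanishes, and $H_q(C^t,C^{t^-})\cong Q_q$ has dimension $c_q(t)$.

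\textbf{Step 2: the long exact sequence.} We use the long exact sequence of the pair $(C^t,C^{t^-})$:
\[
H_q(C^{t^-})\xrightarrow{i^q_t}H_q(C^t)\to H_q(C^t,C^{t^-})\xrightarrow{\delta}H_{q-1}(C^{t^-})\xrightarrow{i^{q-1}_t}H_{q-1}(C^t).
\]
Exactness gives a short exact sequence
\[
0\to\coker i^q_t\to H_q(C^t,C^{t^-})\to\ker i^{q-1}_t\to0.
\]
Taking dimensions yields \cref{eq:relative-generator-count}.

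The main point needing care is the exactness bookkeeping, in particular that the image of $\delta$ is exactly $\ker i^{q-1}_t$. If one prefers to avoid the long exact sequence, there is a direct argument. For $e$ of value $t$, the element $\partial e$ is a cycle in $C^{t^-}$. Its class lies in $\ker i^{q-1}_t$ because it bounds in $C^t$. Conversely, every element of $\ker i^{q-1}_t$ is of the form $[\partial c]$, and $c$ may be taken modulo $C^{t^-}$. The kernel of $e\mapsto[\partial e]$ consists of those $c$ whose boundary bounds in $C^{t^-}$. Modulo $C^{t^-}_q$, these correspond to new cycles, which gives exactly $\coker i^q_t$.

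\textbf{Step 3: summing over all values.} Summing \cref{eq:relative-generator-count} over all $t$ and $q$ gives
\[
\sum_{q,t}c_q(t)=\sum_{q,t}b_q(t)+\sum_{q,t}d_q(t).
\]
By interval decomposition of the persistence module, $b_q(t)$ counts the intervals of dimension $q$ born at $t$ and $d_q(t)$ counts those dying at $t$. The first sum therefore counts every interval once, finite or essential, and the second counts every finite interval once.

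It remains to see that every finite interval has positive length. A death at $t$ means the bar ends at $t$, and the bar exists before $t$, so its birth is strictly less than $t$. Zero-length bars are invisible in the module, so they are not counted at all. Hence the total number of generators is $N_{\mathrm{fin}}+N_{\mathrm{ess}}+N_{\mathrm{fin}}=2N_{\mathrm{fin}}+N_{\mathrm{ess}}$.

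This also identifies the intervals counted here with the positive-length intervals in the statement.
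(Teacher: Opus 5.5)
Your proposal is correct and follows the paper's proof essentially step for step. Both show that $C^t/C^{t^-}$ has zero differential, so that $\dim H_q(C^t,C^{t^-})=c_q(t)$; both extract $0\to\coker i_t^q\to H_q(C^t,C^{t^-})\to\ker i_t^{q-1}\to 0$ from the long exact sequence of the pair; and both sum over $q$ and $t$, counting one birth per interval and one death per finite interval. In your optional direct argument, identifying the kernel of $c\mapsto[\partial c]$ with $\coker i_t^q$ silently reuses the hypothesis $\partial(C^t_{q+1})\subseteq C^{t^-}_q$, which gives $B_q(C^t)\subseteq Z_q(C^{t^-})$ and hence $\coker i_t^q=Z_q(C^t)/Z_q(C^{t^-})$; you should state this explicitly, but the argument is otherwise fine.
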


\begin{proof}
Consider the relative chain complex $C^t/C^{t^-}$. Since every boundary strictly lowers the filtration value, the boundary of a generator with filtration value $t$ lies entirely in $C^{t^-}$. Therefore the relative boundary map is zero. Hence
\[
H_q(C^t,C^{t^-})
\cong
C_q^t/C_q^{t^-},
\]
and therefore
\begin{equation}
\label{eq:relative-homology-dimension}
\dim H_q(C^t,C^{t^-})=c_q(t).
\end{equation}

The short exact sequence of chain complexes
\[
0\longrightarrow C^{t^-}
\longrightarrow C^t
\longrightarrow C^t/C^{t^-}
\longrightarrow0
\]
induces the standard long exact sequence in homology \cite[Section~2.1]{hatcher2002algebraic}. The part relevant here is
\[
H_q(C^{t^-})
\xrightarrow{i_t^q}
H_q(C^t)
\xrightarrow{j}
H_q(C^t,C^{t^-})
\xrightarrow{\delta}
H_{q-1}(C^{t^-})
\xrightarrow{i_t^{q-1}}
H_{q-1}(C^t).
\]
Exactness means
\[
\ker j=\im i_t^q,
\qquad
\ker\delta=\im j,
\qquad
\im\delta=\ker i_t^{q-1}.
\]
By the first isomorphism theorem,
\[
\im j
\cong
H_q(C^t)/\ker j
=
H_q(C^t)/\im i_t^q
=
\coker i_t^q.
\]
Also, $\delta$ maps $H_q(C^t,C^{t^-})$ onto $\ker i_t^{q-1}$ and has kernel $\im j$. Thus the displayed part of the long exact sequence yields the short exact sequence
\[
0
\longrightarrow
\coker i_t^q
\longrightarrow
H_q(C^t,C^{t^-})
\longrightarrow
\ker i_t^{q-1}
\longrightarrow
0.
\]
Taking dimensions over $\Ftwo$ and using \cref{eq:relative-homology-dimension} gives
\[
c_q(t)
=
\dim\coker i_t^q+\dim\ker i_t^{q-1}
=
b_q(t)+d_{q-1}(t),
\]
which proves \cref{eq:relative-generator-count}.

Finally, sum this identity over all homological dimensions $q$ and filtration values $t$:
\[
\sum_{q,t}c_q(t)
=
\sum_{q,t}b_q(t)
+
\sum_{q,t}d_q(t).
\]
Every barcode interval contributes exactly one birth, so
\[
\sum_{q,t}b_q(t)=N_{\mathrm{fin}}+N_{\mathrm{ess}}.
\]
Every finite interval contributes exactly one death and every essential interval contributes none, so
\[
\sum_{q,t}d_q(t)=N_{\mathrm{fin}}.
\]
Therefore
\[
\sum_{q,t}c_q(t)
=
(N_{\mathrm{fin}}+N_{\mathrm{ess}})
+
N_{\mathrm{fin}}
=
2N_{\mathrm{fin}}+N_{\mathrm{ess}},
\]
which is \cref{eq:two-fin-plus-ess}.
\end{proof}

The persistence barcode depends only on the persistence module, whereas the persistence pairing graph also depends on the representatives selected by the deterministic matrix reduction. Consequently, preservation of the persistence module alone does not guarantee preservation of the graph.

We therefore distinguish two correctness statements. First, the value block cancellations preserve the persistence module and hence the barcode. Second, once the cell order, same value pivot rule, and reduction convention are fixed, subsequent storage and streaming optimizations must reproduce exactly the same compressed chain complex and therefore the same reduction and graph structure. The next section establishes this stronger equivalence for the streaming construction.

\subsection{Compact Filtration Order}

The streaming constructor groups cells by
\[
\bigl(f(\sigma),\dim\sigma\bigr),
\]
and processes each group in increasing regular grid index. This agrees with the deterministic order in \cref{eq:cell-order}. The correctness argument requires only a deterministic ordering of the finitely many scalar values present in the input; it is not tied to a particular bit depth. In the present quantized application, the integer filtration values provide these ranks directly. Other scalar representations can be ordered by their distinct values before the same bucket construction is applied.

Suppose the regular grid indices in one bucket are
\[
x_0<x_1<\cdots<x_r.
\]
It is enough to retain the first index $x_0$ and the positive successive differences
\[
\Delta_k=x_k-x_{k-1},
\qquad 1\le k\le r.
\]
Because a bucket is consumed sequentially, its indices are reconstructed cumulatively from
\[
x_k=x_{k-1}+\Delta_k.
\]
The particular byte encoding of these positive integers is an implementation detail and is not needed for the equivalence argument below.

The original scalar array is needed only to assign filtration values and can be released before contraction begins. Thereafter, the boundary of each cell is generated on demand from its regular grid index and the grid strides. A $q$-dimensional cube has $2q$ codimension one faces, and hence at most $2d$ in ambient dimension $d$. Thus, for fixed $d$, boundary generation requires only a constant amount of local index computation per cell.

This follows the general strategy of implicit complex representations used in persistence computations: local incidence relations are regenerated when needed rather than storing the complete boundary matrix \cite{bauer2021ripser,wagner2012cubical,kaji2020cubicalripser}.

\subsection{Streaming Equivalence}

The streaming construction changes how the filtration order and boundaries are stored, but not the deterministic value block contraction described above. The following result shows that these storage changes leave the compressed complex unchanged.

\begin{theorem}
\label{thm:streaming-equivalence}
Consider a materialized and a streaming implementation of the same value block contraction over $\Ftwo$. Suppose that both implementations
\begin{enumerate}
\item process the cubical cells in the same deterministic order \cref{eq:cell-order};
\item generate the same boundary for each processed cell;
\item use the same replacement relation for every previously cancelled generator; and
\item choose the largest remaining same value boundary element as the cancellation pivot.
\end{enumerate}
Then, after each processed cell, the two implementations have the same surviving generators, the same cancellation relations, and the same rewritten boundary columns. Consequently, they produce identical compressed filtered chain complexes.
\end{theorem}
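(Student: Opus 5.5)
The argument is an induction on the number of processed cells, using the deterministic order \cref{eq:cell-order} as the common time index for both implementations. The invariant is the triple consisting of the set of surviving generators, the set of stored cancellation relations $x\rightsquigarrow a$, and the map from each surviving generator to its rewritten boundary column. Both implementations start from the same empty state before the first cell. The induction hypothesis is that after processing $\sigma_1,\dots,\sigma_{k-1}$ the two triples coincide. By hypothesis~1 the next cell processed by both is the same $\sigma_k$. It therefore suffices to show that one processing step is a deterministic function of $\sigma_k$, its generated boundary, and the current state.

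The processing step splits into three substeps.
\begin{enumerate}
\item \emph{Boundary generation.} By hypothesis~2, $\partial\sigma_k$ is the same chain in both implementations. Over $\Ftwo$ this chain is determined by its support, so any difference in storage format, such as an on-demand stride computation versus a stored column, is irrelevant.
\item \emph{Rewriting.} Each cancelled generator occurring in $\partial\sigma_k$ is replaced by its stored chain. By hypothesis~3 and the induction hypothesis, the relations are identical. The result is a sum over $\Ftwo$, i.e.\ an iterated symmetric difference, which does not depend on the order of substitution. I also need that rewriting terminates in a chain supported on surviving generators and uncancelled current-block generators. This holds because each stored relation is, at the time it is stored, already expressed in such generators, or is expanded recursively in the same deterministic way in both implementations. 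So both produce the same rewritten chain $c_k$.
\item \emph{Decision.} The decision depends only on $c_k$ and $f$. Let $x$ be the largest element of $c_k$ in the order \cref{eq:cell-order}. If $c_k=0$ or $f(x)<f(\sigma_k)$, both implementations retain $\sigma_k$ with boundary $c_k$. Otherwise, by hypothesis~4, both cancel the pair $(x,\sigma_k)$ and store the same relation $x\rightsquigarrow c_k+x$. The paper's streaming pivot takes the ``last uncancelled term of value $t$'' rather than searching for a maximum, so I must check that this coincides with the largest same-value element. This follows from storing boundaries sorted in the order \cref{eq:cell-order} and from the fact that every same-value row sorts after every row of smaller value.
\end{enumerate}
Either way the updated triples coincide, which closes the induction.

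There is one further subtlety. A cancellation may require updating boundary columns of previously retained generators in the current value block that contain $x$, as in the basis change $z\mapsto z+y$ of \cref{lem:same value-cancellation}. Alternatively, the algorithm may apply such updates lazily through the rewrite rule. I would treat this as part of hypothesis~3, meaning the same relation is applied at the same moments. Then it is again a deterministic function of the shared state, and the columns stay equal.

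Applying the invariant after the last cell gives equal surviving generators with equal boundary columns. The filtration values are those of the original cells, and the order is the restriction of \cref{eq:cell-order}. The two compressed filtered chain complexes are therefore identical.

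The main obstacle is the pivot equivalence in the decision substep. It is the only place where the streaming implementation uses a different procedure rather than different storage. I would verify it first, showing that the sorted-order invariant of stored rewritten boundaries is itself preserved by each rewriting step, so that ``last same-value term'' and ``largest same-value term'' always agree.
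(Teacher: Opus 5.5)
Your proposal is correct and follows the paper's proof: an induction along the common processing order. Hypothesis~2 fixes the generated boundary, hypothesis~3 together with the inductive hypothesis fixes the rewritten boundary, and hypothesis~4 fixes both the cancel-or-retain decision and the stored relation $x\rightsquigarrow a$. The step you call the main obstacle is already assumed by hypothesis~4 (the ``last uncancelled term'' remark is the paper's separate check that the streaming code satisfies that hypothesis), and the retroactive column updates you hedge about cannot occur, since a retained generator's rewritten boundary lies entirely in strictly lower filtration values that are already final.
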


\begin{proof}
We proceed by induction over the common processing order of the cells. Before any cell is processed, both implementations have empty sets of surviving generators, cancellation relations, and compressed boundary columns, so their states agree.

Assume that the two states agree after the first $p-1$ cells have been processed, and let $\sigma$ be the $p$-th cell. By assumption~1, both implementations process the same cell $\sigma$, and by assumption~2 they generate the same initial boundary $\partial\sigma$.

This boundary may contain generators that were cancelled during earlier steps. By the induction hypothesis, every such generator has the same stored replacement relation in both implementations. Consequently, both perform the same substitutions and obtain the same rewritten boundary.

If this boundary contains an uncancelled element of the same filtration value as $\sigma$, assumption~4 selects the same largest such element $x$ in both implementations. Hence, both cancel the same pair $x,\sigma$. Since the rewritten boundary is identical, they also store the same replacement relation for $x$.

If no same value element remains, both implementations retain $\sigma$ and store the same rewritten boundary as its compressed boundary column.

Thus, in either case, the surviving generators, cancellation relations, and compressed boundary columns agree after processing the $p$-th cell. By induction, the two implementations agree after every processed cell and therefore produce identical compressed filtered chain complexes.
\end{proof}

\begin{corollary}
Any implementation change that affects only the storage representation preserves the deterministic value block output, provided that the four hypotheses of \cref{thm:streaming-equivalence} remain satisfied.
\end{corollary}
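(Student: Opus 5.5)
The corollary should follow directly from \cref{thm:streaming-equivalence}, with essentially no new mathematics. I will model an implementation as a deterministic procedure whose observable behaviour at each processed cell is fixed by four ingredients: which cell comes next, which boundary is generated for it, which replacement relations are applied to cancelled generators, and which pivot is selected. A change "affects only the storage representation" when it alters how these objects are encoded, but not the objects themselves. Examples are delta-coded filtration buckets instead of an explicit sorted list, boundaries generated on demand from grid strides instead of stored columns, flat arenas instead of per-relation containers, and 32-bit indices.

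The argument will proceed in three steps.

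\textbf{Step 1: fix the baseline.} Take the materialized block algorithm of \cref{sec:cancellation} as the reference implementation, and let the modified implementation play the role of the "streaming" one in \cref{thm:streaming-equivalence}. The proof of that theorem never uses the particular data structures. It only uses agreement of the decoded cell, decoded boundary, decoded relations and chosen pivot at each step. So the theorem applies verbatim to any pair of implementations satisfying its four hypotheses, not just to the specific streaming constructor.

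\textbf{Step 2: check the decoding equalities, by way of illustration.}
\begin{itemize}
\item For the compact filtration order, the reconstruction $x_k=x_{k-1}+\Delta_k$ recovers exactly the sorted grid indices of each $(f,\dim)$ bucket. Hence the processing order agrees with \cref{eq:cell-order}.
\item For implicit boundaries, the faces obtained by subtracting or adding the strides along odd coordinates are exactly the faces in $\partial\sigma$.
\item Relations and pivots are read back from the same decoded data, so they also agree.
\end{itemize}
These checks are the content of the proviso "provided that the four hypotheses remain satisfied". Since that proviso is assumed in the corollary, I only need to note that they are exactly what the storage change must preserve.

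\textbf{Step 3: conclude.} Invoke \cref{thm:streaming-equivalence} to get the same surviving generators, the same relations and the same rewritten columns after every cell, and hence the identical compressed filtered complex, i.e.\ the same deterministic value block output.

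\textbf{Main obstacle.} The only delicate point is hypothesis~4 under a changed storage layout. The pivot is selected as the last uncancelled same-value term of the projected boundary. This selection agrees with "largest remaining same value element" only if the stored boundary stays sorted in processing order. I would therefore state explicitly that any storage change must keep projected boundaries in that order, or otherwise select the pivot by the order itself. With that in place, everything else is an immediate application of the theorem.
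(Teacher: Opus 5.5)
Your proposal is correct and follows the paper's route. The paper gives no separate argument: it treats the corollary as an immediate consequence of \cref{thm:streaming-equivalence}, whose induction uses only the four hypotheses and not how cells, boundaries, or relations are stored. Your checks in Step 2 and your remark on the pivot (the stored boundary must stay sorted so that the last same-value term is the largest) match the examples the paper lists after the corollary and its earlier note on pivot selection, but they are not logically needed once the hypotheses are assumed.
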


This includes replacing raw filtration-order arrays by consumable buckets, delta-coded order storage, compact integer indices and filtered-cell records, and flat storage for replacement relations.

\subsection{Efficient Graph Computation}

After the same value cancellations, every nonzero boundary term strictly lowers the filtration value, so the compressed complex contains no zero-length persistence pairs. Persistence reduction is therefore performed only on the remaining generators associated with positive-length and essential persistence. We next show how the additional relations required for the persistence pairing graph can be recovered from this single reduction, without performing a separate cohomology reduction.

The standard reduction records a change-of-basis matrix $V$ satisfying
\[
R=DV.
\]
Rather than storing $V$ explicitly, the compact implementation records only the column additions used during reduction. Whenever $R_j\leftarrow R_j+R_i$, where $i<j$, the source index $i$ is appended to the operation log of column $j$.

Each column addition corresponds to an elementary upper-unitriangular matrix $E$. Over $\Ftwo$, the operation is its own inverse, so $E^{-1}=E$. If the reduction performs the operations $E_1,\ldots,E_m$ in this order, then
\[
V=E_1E_2\cdots E_m,
\qquad\text{and}\qquad
V^{-1}=E_m\cdots E_2E_1.
\]
Thus, the action of $V$ or $V^{-1}$ can be reconstructed from the stored operation log without materializing the full matrix $V$.

\begin{proposition}
\label{prop:operation-log}
Let $x$ be a coordinate vector in the original cell basis. During a left-to-right reduction, the vector $V^{-1}x$ can be computed from the operation log without forming $V$ or $V^{-1}$. For each target column $j$ that belongs to the original support of $x$, apply in chronological order all source toggles stored for target $j$. A source index that is created by such a toggle does not need recursive processing.
\end{proposition}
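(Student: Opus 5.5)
The plan is to make the elementary matrices explicit and then track which coordinates of the running vector each one reads and writes. A column addition $R_j\leftarrow R_j+R_i$ with $i<j$ is right multiplication by
\[
E=I+e_ie_j^{\mathsf T},
\]
which is upper unitriangular and satisfies $E^{-1}=E$ over $\Ftwo$. Acting on a vector, it gives
\[
Ey=y+y_j\,e_i .
\]
So $E$ \emph{reads} coordinate $j$ (the target) and, when that coordinate is one, \emph{toggles} coordinate $i$ (the source). It changes no other coordinate.

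Since $V^{-1}=E_m\cdots E_1$, computing $V^{-1}x$ means applying $E_1,E_2,\ldots,E_m$ to $x$ in chronological order. I will use one structural fact about a left-to-right reduction: all operations with target $j$ are performed consecutively, while column $j$ is being reduced. Operations with target $j'<j$ all come before them, and operations with target $j'>j$ all come after. I will state this explicitly as the reduction convention; it is the one place the hypothesis ``left-to-right'' is used.

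The core step is an invariant on the running vector $y$. When the block of operations with target $j$ begins, I claim $y_j=x_j$. Every earlier operation has target $j'<j$ and source $i<j'<j$, so it never writes coordinate $j$. Inside the block, sources satisfy $i<j$, so $y_j$ stays fixed throughout the block. Hence the block with target $j$ has an effect exactly when $j\in\supp(x)$, and in that case each of its operations toggles its stored source index.

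It remains to justify the clause that toggled sources need no recursive processing. A coordinate $i$ toggled inside block $j$ satisfies $i<j$. All later blocks have targets $j'>j>i$, so coordinate $i$ is never read again, and its toggling cannot trigger further operations. Therefore
\[
V^{-1}x=x+\sum_{j\in\supp(x)}\ \sum_{i\in\mathrm{log}(j)} e_i ,
\]
with the toggles applied in chronological order. Over $\Ftwo$ the order does not even matter, since the toggles commute. This sum is exactly the procedure described in the statement, and it never forms $V$ or $V^{-1}$.

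I expect the main obstacle to be making the ordering convention precise, rather than any algebra. If the implementation used clearing or revisited columns, so that operations for a target were not contiguous or a source index could exceed some later target, the invariant $y_j=x_j$ could fail. The proof should therefore state explicitly that each column is reduced to completion before the next column is processed.
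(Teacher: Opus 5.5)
Your proposal is correct and follows essentially the same argument as the paper. In both, each elementary operation reads its target coordinate $j$ and toggles only a smaller source $i$. Under the left-to-right convention, coordinate $j$ therefore still equals $x_j$ when its block of operations is reached, and toggled sources are never read again. Your explicit form $E=I+e_ie_j^{T}$ and the closed-form sum are a more concrete rendering of that same reasoning.
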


\begin{proof}
Consider an elementary column operation, for $i < j$,
\[
R_j\leftarrow R_j+R_i.
\]
When the corresponding elementary matrix acts on a coordinate vector, it toggles coordinate $i$ exactly when coordinate $j$ is present. Thus, each operation can only create or remove a coordinate with smaller index than its target.

Because the reduction proceeds from left to right, all operations whose target is $i$ occur before any operation whose target is a larger index $j$. Hence, if processing the log of target $j$ toggles a new coordinate $i<j$, the operations associated with target $i$ have already occurred in chronological order. They must therefore not be applied recursively.

It remains to determine which target indices $j$ should have their logs replayed. Before the reduction reaches column $j$, no earlier operation can change coordinate $j$: such a change would require $j$ to be the source of an operation with some larger target, and that operation occurs only later. Therefore, coordinate $j$ is present when its operations are reached if and only if $j$ belonged to the original support of $x$.

Consequently, replaying, in chronological order, the stored source toggles for exactly the target indices in the original support of $x$ reproduces the action of $V^{-1}$, without constructing either $V$ or $V^{-1}$.
\end{proof}

For a finite persistence pair $(i,j)$, the reduced death column $R_j$ is a cycle. Indeed, since $R=DV$,
\[
R_j=DV_j,
\]
and therefore $DR_j=D^2V_j=0$. The implementation then computes
\begin{equation}
\label{eq:lambda-relation}
\lambda_j:=V^{-1}R_j,
\end{equation}
using the operation log described above. This expresses the reduced cycle in the selected reduction basis. Its leading active nonzero coordinate is the paired birth $i$; the remaining coordinates encode the representative-level relations used in the persistence pairing graph. The homological interpretation of these relations is developed in the companion theory paper \cite{manzanares2026PPG}.

A raw relation $\lambda_j$ may still contain birth labels corresponding to classes that died at an earlier filtration value. Before processing a death event at value $t$, each such label is eliminated using the relation stored when that class died. These substitutions are triangular: eliminating a dead birth label introduces only smaller birth labels in the fixed order. Hence, the process terminates. The resulting relation is expressed entirely in the selected basis of classes that remain active in $H_q(C^{t^-})$.

Suppose $m$ persistence intervals die at the same filtration value $t$. Immediately before $t$, order the active birth labels so that the $m$ labels dying at $t$ come first, followed by the labels that survive beyond $t$. In other words, we have a matrix of the form
\[
[\text{dying labels}\mid\text{surviving labels}].
\]
Express each of the $m$ projected kernel relations in these coordinates and place the resulting vectors as the rows of
\begin{equation}
\label{eq:event-matrix}
C_t=[C_D\mid C_S].
\end{equation}
The block $C_D$ records the coefficients of the dying labels, while $C_S$ records the coefficients of the surviving labels that occur in at least one of the relations.

\begin{lemma}
\label{lem:triangular-relations}
Let a projected kernel relation correspond to a dying birth label $i$. Its coefficient at $i$ is one, and every other active birth label occurring in the relation is smaller than $i$ in the fixed filtration-compatible order.
\end{lemma}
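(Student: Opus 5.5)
The plan is to read off the triangular shape from the triangularity of $V$ and then show that projection preserves it. I will first treat the raw relation $\lambda_j=V^{-1}R_j$ from \cref{eq:lambda-relation}, where $(i,j)$ is the finite pair with $\low(R_j)=i$. Then I will show that the elimination of dead labels introduces no larger label and does not disturb the leading coefficient.

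\textbf{Step 1 (raw leading term).} Since $V$ is upper unitriangular, so is $V^{-1}$. For any vector $x$ this gives
\[
(V^{-1}x)_k=x_k+\sum_{l>k}(V^{-1})_{kl}x_l .
\]
The support of $R_j$ lies in indices $\le i$, and $(R_j)_i=1$. Hence $(\lambda_j)_k=0$ for $k>i$, and $(\lambda_j)_i=(R_j)_i=1$. So $i$ is the largest index in the support of $\lambda_j$, with coefficient one. This is consistent with the operation-log computation of \cref{prop:operation-log}, since each toggle only creates smaller indices.

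\textbf{Step 2 (only birth labels occur).} I next show that $\lambda_j$ is supported on positive indices, so that it really is a relation among birth labels. Write $R_j=\sum_l(\lambda_j)_lV_l$ and apply $D$. The left side gives $DR_j=0$, and $DV_l=R_l$. Positive columns have $R_l=0$, so we obtain
\[
0=\sum_{l\ \text{negative}}(\lambda_j)_lR_l .
\]
The nonzero reduced columns have pairwise distinct lows, so they are linearly independent. Therefore $(\lambda_j)_l=0$ for every negative index $l$.

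\textbf{Step 3 (projection, by induction on death order).} Some of the positive labels $k<i$ in $\lambda_j$ may be classes that already died at a value $s<t$. I will induct over death events processed in increasing filtration order, and within a value in the fixed order. The induction hypothesis is that every stored projected relation for a dying label $k$ has coefficient one at $k$ and otherwise only labels smaller than $k$.

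Substituting such a relation for $k$ in $\lambda_j$ has two effects. It removes $k$, and it adds only labels below $k<i$. It therefore never touches coordinate $i$ and never creates a label exceeding $i$. The process terminates because the largest dead label strictly decreases at each substitution. Afterwards every remaining label other than $i$ is active in $H_q(C^{t^-})$ and smaller than $i$. Label $i$ itself is active before $t$, because $b(i)<t$: zero-length pairs were removed, so by \cref{thm:generator-count} births occur strictly before deaths.

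\textbf{Expected obstacle.} I expect the main subtlety to be justifying that substitutions are legitimate relations in $H_q(C^{t^-})$ and that the labels in $\lambda_j$ are the right objects to substitute. In particular, a label dying at an earlier value $s$ is zero in homology from $s$ on, and its stored relation expresses that death. This is exactly the companion-paper interpretation. For the present lemma, however, only the ordering bookkeeping of Steps 1–3 is needed.
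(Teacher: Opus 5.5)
Your proposal is correct and follows the paper's argument: the paper also starts from the raw relation having coefficient one at $i$ and only smaller labels otherwise. It then observes that substituting the stored relation for an earlier-dead label $k<i$ removes $k$, introduces only labels below $k$, and leaves coordinate $i$ unchanged. Your Steps 1--2 (the leading term from the upper unitriangularity of $V^{-1}$, and the vanishing of negative coordinates) and your explicit induction over earlier death events just make rigorous what the paper takes as given, including its assertion that each stored replacement relation is already triangular.
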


\begin{proof}
Before projection, the relation associated with the dying birth label $i$ has coefficient one at $i$, and every other nonzero coordinate has label smaller than $i$. Suppose that one such label $k<i$ died earlier. Its stored replacement relation contains $k$ together with only labels smaller than $k$. Eliminating $k$ therefore introduces only labels smaller than $k$, and hence smaller than $i$, while leaving the coefficient of $i$ unchanged. Repeating this substitution for every earlier-dead label yields a projected relation whose coefficient at $i$ is still one and whose remaining active labels are all smaller than $i$.
\end{proof}

\begin{theorem}
\label{thm:unit-lower}
Order the $m$ dying labels at value $t$ by increasing birth index
\[
i_1<i_2<\cdots<i_m.
\]
Then the dying block $C_D$ in \cref{eq:event-matrix} is lower triangular with every diagonal entry equal to one. In particular, $C_D$ is invertible over $\Ftwo$.
\end{theorem}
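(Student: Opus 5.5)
The plan is to read the statement off \cref{lem:triangular-relations} entry by entry. Index the rows of $C_t$ by the dying labels $i_1<\cdots<i_m$, and let the first $m$ columns, the block $C_D$, carry the same labels in the same order. Then the $(r,s)$ entry of $C_D$ is the coefficient of the dying label $i_s$ in the projected kernel relation attached to $i_r$. The claim has two parts, that this matrix is lower triangular and that its diagonal entries are one, and both should follow directly once the relations are projected onto the active labels just before $t$.

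First comes the diagonal. By \cref{lem:triangular-relations}, the projected relation for $i_r$ has coefficient one at $i_r$. I would also check that $i_r$ is still an active label in $H_q(C^{t^-})$, so that the projection step never eliminates it: it dies at $t$ and not at an earlier value, and its birth precedes $t$ because the interval has positive length. This gives $(C_D)_{rr}=1$.

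Next comes the triangularity. Again by \cref{lem:triangular-relations}, every other active label with a nonzero coefficient in the relation for $i_r$ is smaller than $i_r$. For $s>r$ we have $i_s>i_r$, so $(C_D)_{rs}=0$. Entries with $s<r$ are allowed to be nonzero, and these are the below-diagonal ones. So $C_D$ is lower unitriangular, its determinant over $\Ftwo$ is the product of the diagonal entries, which is $1$, and $C_D$ is invertible. Equivalently, $C_D$ can be inverted by forward substitution.

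The main obstacle is a bookkeeping point. Several intervals die simultaneously at $t$, and the relations being projected belong to labels that have not died yet. We must make sure that the elimination of earlier-dead labels only uses replacement relations from deaths at values strictly less than $t$, and never the other relations at $t$. If it used the latter, the dying labels could cancel each other's diagonal coefficients. I would handle this by noting that projection is defined relative to $H_q(C^{t^-})$, so only stored relations from strictly earlier events are applied. These preserve the coefficient of every label active before $t$, including all of $i_1,\dots,i_m$, and \cref{lem:triangular-relations} applies to each row independently.
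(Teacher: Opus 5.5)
Your proposal is correct and matches the paper's proof: both read the unit diagonal and the zeros above the diagonal directly off \cref{lem:triangular-relations}, then conclude invertibility from the unit diagonal. One claim in your bookkeeping paragraph should be corrected. Eliminating an earlier-dead label $k$ does \emph{not} preserve the coefficients of every active label, because its stored relation can toggle active labels smaller than $k$, possibly including dying labels $i_s$ with $s<r$. What is preserved is the coefficient of $i_r$, since $k<i_r$. The toggled entries all lie below the diagonal, so your conclusion is unaffected.
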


\begin{proof}
Row $r$ is the projected relation whose leading dying label is $i_r$. By \cref{lem:triangular-relations}, the coefficient of $i_r$ is one, while every other dying label appearing in that row has birth index smaller than $i_r$.

Since the columns are ordered
\[
i_1<i_2<\cdots<i_m,
\]
any such smaller dying label must occur in one of the columns $1,\ldots,r-1$. Thus all entries above the diagonal are zero, and every diagonal entry is one. Hence $C_D$ is unit lower triangular.

A triangular matrix over a field is invertible if and only if all of its diagonal entries are nonzero. Therefore, $C_D$ is invertible over $\Ftwo$.
\end{proof}

Each projected kernel relation maps to zero in $H_q(C^t)$. Let $d$ denote the vector of basis labels that die at $t$, and let $s$ denote the surviving basis labels. The $m$ relations collected in $C_t=[C_D\mid C_S]$ therefore satisfy
\[
C_Dd+C_Ss=0
\]
in $H_q(C^t)$. Since $C_D$ is invertible and subtraction equals addition over $\Ftwo$, we have $d=W_ts$ where
\begin{equation}
\label{eq:event-W}
W_t:=C_D^{-1}C_S.
\end{equation}
Thus, $(W_t)_{uv}=1$ means that the relation for dying label $u$ contains surviving label $v$, and the persistence pairing graph contains the directed edge $u\to v$.

\begin{corollary}
Since $C_D$ is unit lower triangular, $W_t$ can be obtained by solving
\[
C_DW_t=C_S
\]
by forward substitution without matrix inversion or pivot search.
\end{corollary}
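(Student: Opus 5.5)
The plan is to take \cref{thm:unit-lower} as the structural input and write the forward-substitution recursion explicitly. By that theorem, once the rows and the dying columns are both ordered by increasing birth index $i_1<\cdots<i_m$, the block $C_D$ is unit lower triangular over $\Ftwo$. The system $C_DW_t=C_S$ then splits into $m$ scalar-row equations. Row $r$ reads
\[
W_t[r,:]+\sum_{k<r}(C_D)_{rk}\,W_t[k,:]=C_S[r,:],
\]
because the diagonal coefficient is one and all entries above the diagonal vanish. Over $\Ftwo$ this gives the recursion
\[
W_t[r,:]=C_S[r,:]+\sum_{k<r,\ (C_D)_{rk}=1}W_t[k,:].
\]

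First, I will argue by induction on $r$ that this recursion is well defined. Each row $W_t[r,:]$ uses only rows $W_t[k,:]$ with $k<r$, and those have already been computed. The base case $r=1$ is simply $W_t[1,:]=C_S[1,:]$.

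Second, I will check that the rows produced in this way satisfy $C_DW_t=C_S$. This holds by construction, since each recursion step is exactly row $r$ of that equation rearranged.

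Third, I will invoke uniqueness. \Cref{thm:unit-lower} shows that $C_D$ is invertible, so the solution of $C_DW_t=C_S$ is unique. It therefore coincides with $C_D^{-1}C_S$, which is the matrix $W_t$ of \cref{eq:event-W}.

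Finally, I will note what the procedure avoids. The algorithm never forms $C_D^{-1}$, because it uses only XORs of previously computed sparse rows. It also never searches for a pivot, because the diagonal entries are known in advance to equal one, so no division or row swap is ever needed.

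I do not expect a real obstacle. The only point needing care is the index bookkeeping: rows and dying columns must be sorted by the same birth order, so that ``lower triangular'' in \cref{thm:unit-lower} matches the direction of the substitution. I will state this convention explicitly at the start of the argument.
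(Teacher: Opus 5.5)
Your proposal is correct and matches the paper, which states this corollary as an immediate consequence of \cref{thm:unit-lower} without a separate proof; you write out the forward-substitution recursion, and the uniqueness of the solution, that the paper leaves implicit. Your requirement that the relation rows and the dying columns be sorted by the same birth order $i_1<\cdots<i_m$ is exactly the convention under which that theorem makes $C_D$ unit lower triangular.
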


A naive representation of $C_S$ would allocate one column for every persistence class surviving beyond $t$. For large graphs, this is unnecessary, because a given death event typically involves only a small subset of those survivors. We therefore restrict $C_S$ to the survivor labels that occur in at least one projected relation.

\begin{lemma}
\label{lem:support-restriction}
Let $v\in\Surv_t$ be a surviving label that does not occur in any projected dying relation at $t$. Then omitting the column corresponding to $v$ from $C_S$ does not change any other transition coefficient and removes no persistence pairing graph edge.
\end{lemma}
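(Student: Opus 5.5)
The argument is essentially column-wise linear algebra: forward substitution acts on the columns of $C_S$ independently. I will write $C_S=[C_S^{(1)}\mid\cdots\mid C_S^{(k)}]$ column by column and use $C_DW_t=C_S$ from \cref{eq:event-W}. For any invertible $C_D$, this gives
\[
W_t=C_D^{-1}C_S=\bigl[C_D^{-1}C_S^{(1)}\mid\cdots\mid C_D^{-1}C_S^{(k)}\bigr].
\]
So the column of $W_t$ indexed by a surviving label $v$ depends only on $C_D$ and on the column of $C_S$ indexed by $v$. By \cref{thm:unit-lower}, $C_D$ is determined solely by the projected relations and the dying labels. Deleting any column of $C_S$ therefore leaves $C_D$ unchanged, and it leaves every remaining column of $W_t$ unchanged as well. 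This settles the first claim: no other transition coefficient $(W_t)_{uv'}$ with $v'\neq v$ changes.

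For the second claim, consider a surviving label $v\in\Surv_t$ that occurs in no projected dying relation. Its column in the full $C_S$ (the one indexed by all of $\Surv_t$) is identically zero, since each row of $C_t$ is the coordinate vector of one projected relation. Hence $C_D^{-1}\cdot 0=0$, and $(W_t)_{uv}=0$ for every dying label $u\in\Dying_t$. So no edge $u\to v$ exists in the persistence pairing graph, and omitting the column removes nothing.

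The only point needing care is that restricting $C_S$ does not alter how the rows themselves are formed. The projection step of \cref{lem:triangular-relations} eliminates earlier-dead labels and acts only on the relation vectors, independently of which survivor columns are later kept. Consequently the rows restricted to the retained columns are exactly the corresponding entries of the full rows. I also want to confirm that the definition of edges, $w_{uv}\neq 0$, matches $(W_t)_{uv}$ in the full survivor basis. That identification is the one stated just before \cref{eq:event-W}, so no further obstacle is expected. Applying the argument once for each omitted label, in any order, extends the conclusion to simultaneous removal of all such columns.
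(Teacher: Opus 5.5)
Your proof is correct and follows the paper's argument: the omitted survivor's column of $C_S$ is zero, so the corresponding column of $W_t=C_D^{-1}C_S$ is zero and gives no edge, and multiplication by $C_D^{-1}$ acts column by column, so the remaining transition coefficients are unchanged. Your additional points are harmless refinements of what the paper leaves implicit: working with the full survivor-indexed $C_S$, and noting that row projection does not depend on which columns are kept.
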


\begin{proof}
Since $v$ does not occur in any projected relation, its column in $C_S$ is zero. Hence, the corresponding column of
\[
W_t=C_D^{-1}C_S
\]
is also zero. It therefore contributes no graph edge. Moreover, multiplication by $C_D^{-1}$ acts independently on each column of $C_S$, so removing this zero column does not affect any of the remaining transition coefficients.
\end{proof}

The implementation therefore constructs each event system only on the \emph{touched survivors}, namely the survivor labels that occur in at least one projected relation, and applies the same forward substitution over $\Ftwo$ on this restricted system.

\section{Porous Media Permeability}
\label{sec:drp-methods}

\subsection{Image Preprocessing}

The DRP-372 dataset contains $217$ standardized three-dimensional digital porous-media samples together with transport simulations and geometric descriptors \cite{santos2022drp372}. The collection spans heterogeneous source projects and more than fifty material categories or lithologies, with binary segmentations standardized so that $0$ denotes pore and $1$ denotes solid. We use the $256^3$ branch because it provides a common computational domain and is large enough to stress the computational pipeline.

The public supplement contains $138$ expected $256^3$ samples. We retained samples for which both the binary pore geometry and the corresponding standard single-phase permeability output were available, yielding $119$ complete geometry and target pairs. Samples without this target were excluded, and alternative simulation outputs or volumes at higher resolution were not substituted.

Permeability was analyzed on the logarithmic scale while retaining the source convention for its reported units rather than assigning an unverified SI dimension. Across the $119$ samples, $y$ ranges from approximately $-16.36$ to $-10.82$, with mean $-12.67$. Porosity has Pearson and Spearman correlations of approximately $0.64$ and $0.65$, respectively, with $y$, providing a useful baseline for evaluating additional geometric and topological information.

Because DRP-372 contains related samples originating from common source records, material families, and source projects, validation was performed using provenance-aware groups rather than treating all samples as independent. Samples sharing the same original record were always kept within the same fold. The primary analysis used $37$ material-family groups in a deterministic five-fold split, with test-fold sizes $24,24,24,24,$ and $23$. As a stricter transfer experiment, models were also evaluated by leaving out each of the $18$ source projects in turn. The final cohort contained $109$ distinct exact-origin groups, and no exact-origin or family group was divided between training and test data.

For a binary volume, let $P$ be the pore voxels and $S$ the solid voxels. We compute the signed Euclidean distance transform
\begin{equation}
\label{eq:sedt}
s(x)=d(x,P)-d(x,S).
\end{equation}
With this convention, $s(x)<0$ in pore and $s(x)>0$ in solid. Thus, the sublevel filtration begins at deep pore centers, expands through pore bodies and constrictions, crosses the interface between pore and solid, and then penetrates the solid phase.

Distances are measured between voxel centers. We use voxel units rather than rescaling each source to physical length. This is deliberate: the application combines source projects with different or physical resolutions specified by users, while the target supplied here is the lattice unit system permeability output. A sample specific physical rescaling would therefore alter the common filtration scale in a way that is not matched by the response. No sample specific minimum to maximum normalization is applied.

The distance field is encoded by the common map
\begin{equation}
\label{eq:sedt-quantization}
q(x)=\operatorname{round}\!\left(32768+64\,s(x)\right),
\end{equation}
stored as an unsigned $16$-bit integer. Hence,
\[
s(x)=\frac{q(x)-32768}{64},
\qquad\text{and}\qquad
\ell=\frac{q_d-q_b}{64}
\]
for a finite persistence interval. One integer level represents $1/64=0.015625$ voxel and the rounding error is at most $0.0078125$ voxel. All $119$ samples were encoded without clipping. The cohort-wide signed distance range is approximately $-69.41$ to $260.74$ voxels.

\subsection{Feature Extraction}

Each quantized SEDT volume was processed using the streaming value block implementation described above. A $256^3$ scalar volume induces
\[
513^3=135{,}005{,}697
\]
cubical cells in the doubled-grid representation before same value contraction. For each specimen, we retained the persistence pairing graph together with persistence summaries and event-level information required for subsequent feature extraction.

All $119$ specimens completed successfully. Same value contraction substantially reduced the complexes: the median compressed complex contained $37{,}471$ generators, compared with $135{,}005{,}697$ original cubical cells, corresponding to a median compression factor of approximately $3{,}603$. Across the cohort, the compressed generator count ranged from $519$ to $257{,}987$. The median runtime was approximately $66.0$ seconds per specimen, and the median internally measured peak memory was $1020$ MiB, with a maximum of $1435$ MiB. The resulting graphs contained a median of $18{,}736$ vertices and $7{,}600$ directed edges.

The complete extraction stage produced $648$ numerical descriptors: $4$ geometry descriptors, $224$ persistence descriptors, $116$ pure graph descriptors, and $304$ coupled persistence and graph descriptors. For each homological dimension $q\in\{0,1,2\}$, the persistence core contains the number of finite intervals, median birth, median death, mean persistence, maximum persistence, total persistence, and normalized persistence entropy. If the finite persistence lengths are positive, let
\[
w_i=\frac{p_i}{\sum_{j=1}^n p_j}.
\]
The \emph{normalized persistence entropy}, for $n > 1$, is
\[
H_{\mathrm{pers}} = -\frac{\sum_{i=1}^n w_i\log w_i}{\log n}.
\]
It is set to zero when fewer than two finite intervals are present.

Let $G_q=(V_q,E_q)$ denote the persistence pairing graph in homological dimension $q$. Pure graph descriptors include edge count, edge to vertex ratio, source and sink fractions, isolated-vertex fraction, and largest weak component fraction. A vertex is isolated when both its in-degree and out-degree are zero. The largest weak component fraction is
\[
\frac{\max_{C\in\mathcal C_q}|C|}{|V_q|},
\]
where $\mathcal C_q$ denotes the connected components obtained after ignoring edge direction.

Coupled persistence and graph descriptors measure how persistence values are organized along graph relations. For finite vertices, we compute the Pearson correlations between persistence and in-degree and between persistence and out-degree. For finite to finite edges $(u,v)$, \emph{persistence assortativity} is
\[
\rho_q^{\mathrm{edge}} = \operatorname{corr}\bigl(p(u),p(v)\bigr),
\]
where $p(v)$ denotes the persistence of vertex $v$. We also record the mean absolute persistence difference across such edges,
\[
\frac{1}{|E_q^{\mathrm{ff}}|}\sum_{(u,v)\in E_q^{\mathrm{ff}}}|p(v)-p(u)|.
\]

Finally, the largest component persistence fraction measures how much of the total finite persistence is concentrated in a single weakly connected component. This descriptor is given by
\[
L_q^{\mathrm{pers}} = \frac{\displaystyle\max_{C\in\mathcal C_q}\sum_{\substack{v\in C\\v\ \mathrm{finite}}}p(v)}{\displaystyle\sum_{\substack{v\in V_q\\v\ \mathrm{finite}}}p(v)}.
\]
Undefined correlations or summaries caused by structural absence or zero variance are assigned the fixed value $0$.

\subsection{Model Training and Evaluation}

The primary estimator is Ridge regression \cite{hoerl1970ridge}. Predictor standardization is performed within the training data, and the regularization parameter is selected by nested cross-validation \cite{varma2006bias}. The outer evaluation uses the five predefined family grouped folds. Within each outer-training set, the Ridge penalty is selected by family grouped inner cross-validation over $25$ logarithmically spaced values from $10^{-6}$ to $10^6$. Both standardization and hyperparameter selection are therefore performed without access to the held out outer fold.

Predictive performance is evaluated from the pooled out-of-fold predictions using root mean squared error (RMSE), mean absolute error (MAE), $R^2$, and Spearman rank correlation. Comparisons between nested feature representations are made on the same held out observations. For an error metric $E$, improvement is defined as
\[
\Delta E = E_{\mathrm{baseline}} - E_{\mathrm{expanded}},
\]
so that positive values indicate lower prediction error after adding the new feature family. Uncertainty in these paired differences is estimated using $5000$ bootstrap replicates \cite{efron1993bootstrap} in which complete family groups, rather than individual specimens, are resampled.

As a robustness analysis, the same feature representations and outer folds are evaluated using partial least squares regression (PLSR) \cite{wold2001pls}. The predictors are standardized using the corresponding training data, and the number of latent components is selected by family grouped inner cross-validation. PLSR is used only as an alternative estimator; it is not used to redefine or select the feature sets.

Transfer to unseen source domains is evaluated separately using leave one project out (LOPO) validation. Each of the $18$ source projects is held out in turn, while hyperparameter selection within the remaining data is also grouped by project. Ridge and PLSR use the same feature representations as in the primary analysis. Because project sizes differ substantially, performance is summarized both by pooling all held out specimens and by first computing the error separately within each project and then averaging across projects with equal weight. Paired uncertainty is estimated by bootstrap resampling of complete projects.

LOPO is deliberately stricter than the family grouped analysis. The family grouped experiment evaluates prediction for unseen material families within the heterogeneous cohort, whereas LOPO asks whether relationships learned from the remaining source projects transfer to an entirely unseen project domain.

After the predictive analyses and feature sets were fixed, additional descriptive analyses were performed to investigate which PPG properties recur across models and why their predictive contribution varies between projects. These analyses were not used for feature selection, hyperparameter tuning, or refitting of the predictive models. We compare coefficient signs across the family grouped and LOPO Ridge and PLSR fits and examine whether project level shifts in PPG descriptors coincide with differences in the benefit obtained from adding PPG information.

Selected specimens are additionally examined spatially. For these analyses, the compressed generator indices are mapped back to their corresponding cells in the original doubled cubical grid. If $c$ is one coordinate in this grid, its location in voxel-index coordinates is
\[
x_{\mathrm{vox}} = \frac{c}{2}-\frac12.
\]
The resulting locations identify the critical cells at which selected persistence intervals are born and die under the chosen filtration and reduction convention. They should not be interpreted as the complete geometric support of the corresponding cycles or cocycles.

Finally, to distinguish graph connectivity from persistence magnitude alone, high persistence, high degree PPG vertices are compared with lower degree vertices from the same specimen and homological dimension having similar birth and death filtration values. Controls are required to lie within one voxel in both birth and death SEDT values. Distances between the birth cells, death cells, and interval midpoints of incident PPG neighbors are then used to examine the spatial organization associated with the graph relations.

\subsection{Permeability Prediction}
\label{sec:drp-results}

\paragraph{Prediction with Family Grouping.}
The family grouped results are summarized in \cref{tab:family-results,fig:family-rmse}. Persistence alone ($M2$) gives the largest prediction error among the main topological representations under both estimators. For Ridge regression, its pooled RMSE is $1.2660$. Augmenting persistence with pure PPG graph descriptors ($A2$) reduces the RMSE to $0.7857$, while adding the complete frozen PPG representation ($M3$) gives an RMSE of $0.8653$. The combined geometry, persistence, and PPG representation $M4$ gives the lowest Ridge error, with
\[
\mathrm{RMSE}=0.7665,\qquad R^2=0.4529,\qquad\text{and}\qquad\rho_{\mathrm{Spearman}}=0.8165.
\]
PLSR gives the same broad pattern. The combined $M4$ representation again has the lowest error,
\[
\mathrm{RMSE}=0.6585,\qquad R^2=0.5962,\qquad\text{and}\qquad\rho_{\mathrm{Spearman}}=0.8891.
\]
The agreement between Ridge and PLSR is useful because the two estimators handle the correlated feature sets differently.

\begin{table}[ht]
\centering
\caption{Pooled RMSE under family grouped held out validation. The response is $y=\log_{10}(k)$.}
\label{tab:family-results}
\begin{tabular}{@{}lrr@{}}
\toprule
Model & Ridge RMSE & PLSR RMSE\\
\midrule
$M0$ & 0.9056 & 0.8844\\
$M1$ & 0.9987 & 0.9750\\
$M2$ & 1.2660 & 1.3152\\
$A1$ & 0.9202 & 0.9088\\
$A2$ & 0.7857 & 0.8487\\
$M3$ & 0.8653 & 0.9215\\
$M4$ & \textbf{0.7665} & \textbf{0.6585}\\
\bottomrule
\end{tabular}
\end{table}

For Ridge, the paired $M2\rightarrow A2$ comparison reduces pooled RMSE by $0.4804$, with family cluster bootstrap interval
\[
[0.0198,0.9784],
\]
and improves prediction in all five outer folds. Thus, within the family grouped experiment, the pure graph descriptors provide additional held out predictive information beyond the selected persistence summaries. The broader $M2\rightarrow M3$ comparison reduces RMSE by $0.4007$, although its bootstrap interval $[-0.0722,0.9437]$ includes zero. PLSR gives corresponding point improvements of $0.4664$ and $0.3937$, respectively, with bootstrap intervals that also include zero.

The stronger conditional comparison asks whether PPG information remains useful once geometry and persistence are both already available. For Ridge, $A1\rightarrow M4$ reduces pooled RMSE by $0.1537$, but the bootstrap interval $[-0.0891,0.3770]$ includes zero. PLSR gives a larger point improvement of $0.2503$, again with an interval containing zero,
\[
[-0.0795,0.5524].
\]
The family grouped analysis therefore supports an incremental contribution of PPG information beyond persistence alone, whereas the evidence for an additional contribution beyond the combined geometry and persistence representation is weaker. Equal-weight family errors give the same broad ranking, with $M4$ having the lowest mean family RMSE for both Ridge ($0.4922$) and PLSR ($0.4397$).

\begin{figure}[ht]
\centering
\includegraphics[width=0.78\linewidth]{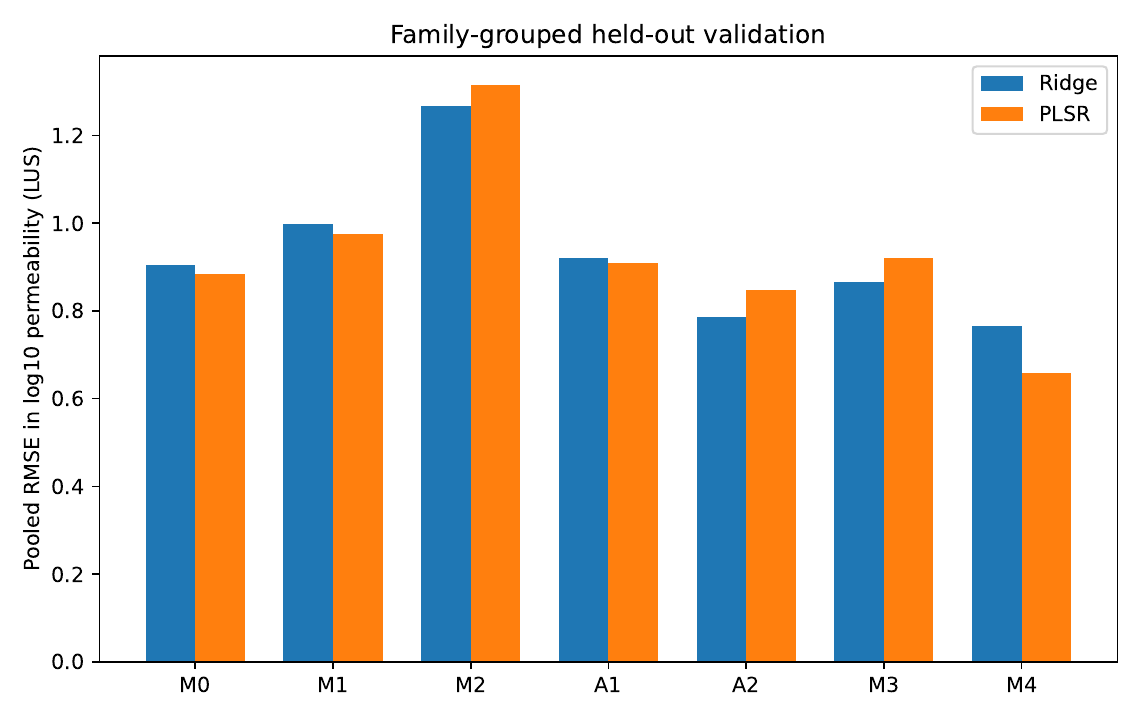}
\caption{Pooled RMSE under family grouped held out validation for Ridge and PLSR. The principal relational comparisons augment persistence alone ($M2$) with pure PPG structure ($A2$) or with the full frozen PPG representation ($M3$).}
\label{fig:family-rmse}
\end{figure}

\paragraph{Project Transfer Validation.}
The stricter leave one project out analysis gives a different ranking, as shown in \cref{tab:lopo-results,fig:lopo-rmse}. Under Ridge regression, geometry plus persistence ($A1$) gives the lowest pooled RMSE (0.6377) while the full $M4$ representation gives $0.6781$. Persistence alone ($M2$) is more competitive than in the family grouped experiment, with RMSE $0.9195$, and augmentation with the full PPG representation ($M3$) reduces this to $0.8288$.

\begin{table}[ht]
\centering
\caption{Pooled RMSE under strict leave one project out validation. The PLSR persistence only model $M2$ is strongly destabilized by the held out DRP374 project and is therefore not used as the principal effect-size comparison for PPG augmentation.}
\label{tab:lopo-results}
\begin{tabular}{@{}lrr@{}}
\toprule
Model & Ridge RMSE & PLSR RMSE\\
\midrule
$M0$ & 0.9358 & 0.9308\\
$M1$ & 0.9409 & 0.9810\\
$M2$ & 0.9195 & 3.5990\\
$A1$ & \textbf{0.6377} & \textbf{0.6604}\\
$A2$ & 0.8542 & 0.9729\\
$M3$ & 0.8288 & 1.0004\\
$M4$ & 0.6781 & 0.8268\\
\bottomrule
\end{tabular}
\end{table}

For Ridge, the paired $M2\rightarrow M3$ comparison reduces pooled RMSE by $0.0907$, with project cluster bootstrap interval
\[
[0.0137,0.2547].
\]
When projects are weighted equally, the mean RMSE improvement is $0.1006$, with interval
\[
[0.0070,0.1955],
\]
and $M3$ outperforms $M2$ in $12$ of the $18$ held out projects. The $M2\rightarrow A2$ comparison is also favorable on average, with pooled Ridge improvement $0.0653$, although its lower bootstrap bound lies close to zero. These results provide evidence that PPG relations can add transferable information beyond persistence summaries alone, although the magnitude of the improvement is considerably smaller than in the family grouped experiment.

\begin{figure}[ht]
\centering
\includegraphics[width=0.78\linewidth]{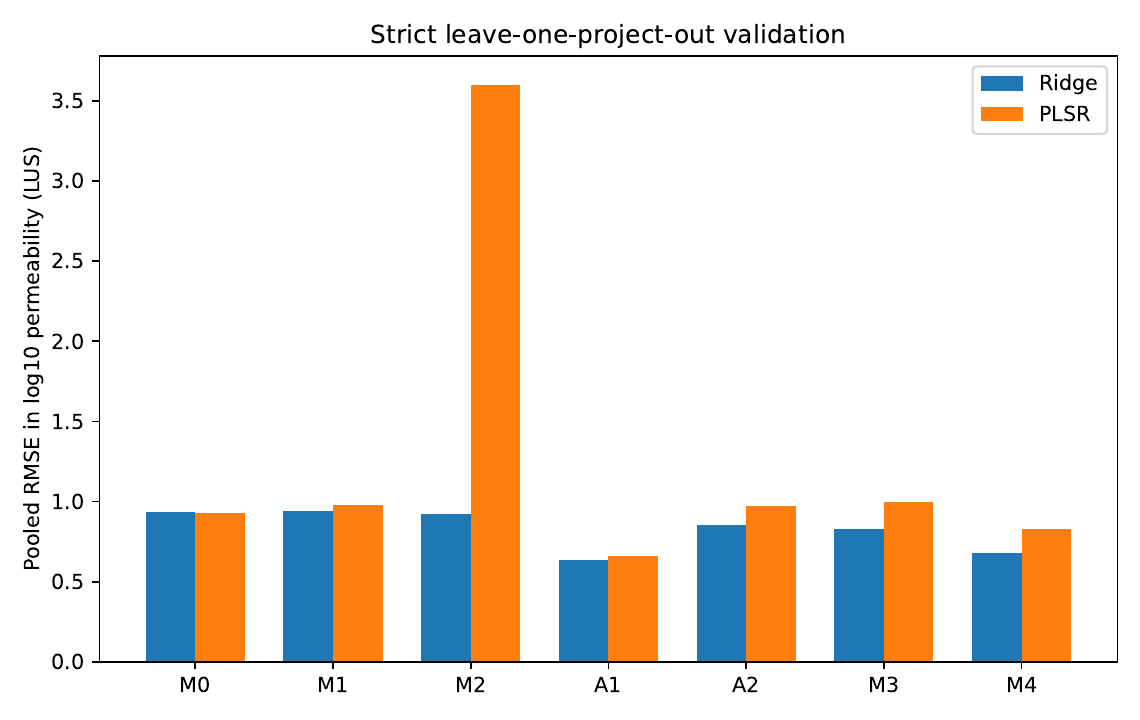}
\caption{Pooled RMSE under strict leave one project out validation. The PLSR persistence only model is strongly affected by DRP374; Ridge therefore provides the more stable comparison of $M2$ with its PPG-augmented representations.}
\label{fig:lopo-rmse}
\end{figure}

The geometry-conditional comparison is different. Ridge $A1\rightarrow M4$ has pooled improvement $-0.0405$ with interval
\[
[-0.1065,0.0855].
\]
At equal project weight, the difference is approximately neutral, with mean improvement $0.0092$ and interval $[-0.0542,0.0695]$; $9$ projects favor each representation. PLSR likewise does not show a consistent transfer advantage from adding PPG descriptors to geometry and persistence. We therefore do not infer that PPG augmentation generally improves prediction once both of those information sources are already available.

The PLSR $M2$ model is strongly destabilized when DRP374 is held out, and this project accounts for most of its squared prediction error. The resulting large PLSR improvements after graph augmentation are therefore not interpreted as reliable estimates of the incremental PPG contribution. Ridge provides the more stable transfer comparison.

\paragraph{Project Heterogeneity.}
The project level results further show that the value of PPG augmentation is domain dependent. The $M2\rightarrow M3$ comparison benefits both Ridge and PLSR in $11$ of the $18$ held out projects, harms both in $2$, and gives estimator-dependent results in $5$. For $M2\rightarrow A2$, the corresponding counts are $10$, $3$, and $5$. By contrast, the geometry-conditional $A1\rightarrow M4$ comparison benefits both estimators in $7$ projects, harms both in $8$, and is mixed in $3$.

A post hoc coefficient stability analysis identifies several recurring conditional associations: positive $H_1$ persistence--in-degree correlation, negative $H_0$ edge persistence assortativity, negative $H_2$ isolated-vertex fraction, positive $H_1$ edge persistence assortativity, and positive $H_1$ persistence--out-degree correlation. Because the predictors are correlated and these analyses were performed after the predictive comparisons were fixed, these patterns are interpreted descriptively rather than as independent or causal effects.

Several PPG descriptors also vary substantially between source projects. In the post hoc analysis, project membership accounts for approximately $57\%$ of the observed variance in $H_1$ edge persistence assortativity and approximately $46\%$ of the variance in $H_1$ edge count. DRP374 provides the clearest example of distribution shift: approximately $23.7\%$ of its pure PPG core values across samples and features lie outside the ranges observed in the remaining projects, and its mean $H_2$ isolated-vertex fraction is shifted by approximately $-3.85$ training standard deviations. DRP374 is also a project for which adding PPG information to geometry and persistence worsens both Ridge and PLSR prediction.

Across all projects, larger PPG feature range extrapolation tends to coincide with smaller $A1\rightarrow M4$ benefit. These associations do not survive correction for multiple comparisons, however, so they are treated as evidence of a possible role for distribution shift rather than as a demonstrated explanation of the observed domain dependence.

\subsection{Spatial Interpretation}
\label{sec:spatial}

The three representative specimens illustrate that the abundance of persistent features and their organization in the PPG need not vary together. Specimen \texttt{317\_01} contains $67{,}582$ PPG vertices and $20{,}643$ edges, including $30{,}833$ vertices in $H_2$. Nevertheless, its $H_2$ edge to vertex ratio is only approximately $0.093$, and about $87.4\%$ of its $H_2$ vertices are isolated. By contrast, \texttt{204\_011} contains only $411$ $H_2$ vertices, but has an $H_2$ edge to vertex ratio of approximately $0.219$ and an isolated-vertex fraction of about $66.2\%$. Thus a specimen may contain many persistence intervals while most of the corresponding PPG vertices participate in no graph relation. Persistence abundance and relational organization therefore provide distinct summaries of the filtration.

$H_1$ and $H_2$ vertices with high persistence and high degree were localized by mapping their compressed birth and death cells back to the doubled cubical grid. For all selected $H_1$ vertices in the three specimens, the birth value is negative and the death value is positive. These intervals are therefore created while the sublevel filtration is expanding through the pore phase and persist after the threshold has crossed the interface between pore and solid. The selected $H_2$ intervals, by contrast, have positive birth and death values. Under the convention in which pore SEDT values are negative, these features arise only after the filtration has entered the solid side of the signed distance field and should therefore not be interpreted automatically as cavities of the original pore space.

Persistence magnitude and graph prominence also differ substantially among the selected vertices. The selected $H_1$ vertices of \texttt{317\_01} have mean total PPG degree $181.2$, compared with $46.7$ for \texttt{374\_03\_011} and $12.1$ for \texttt{204\_011}. Among the selected $H_2$ vertices, \texttt{204\_011} has substantially greater mean persistence, approximately $18.33$ voxels, but mean degree only $2.9$; \texttt{317\_01} has mean persistence approximately $7.15$ voxels and mean degree $19.7$. These examples illustrate the distinction
\[
\text{persistence magnitude} \neq \text{relational prominence}.
\]
To examine whether graph degree can be explained simply by the local environment of a persistence critical cell, selected vertices were matched to lower degree controls from the same specimen and homological dimension whose birth and death SEDT values were each within one voxel of those of the selected vertex. Under this strict criterion, $21$ of the $60$ selected vertices admit a unique feasible match. Among these pairs, the median birth value difference is $0.172$ voxel and the median death value difference is zero.

The clearest matched comparison occurs for $H_2$ in \texttt{204\_011}, for which all ten selected vertices have interval matched controls. Despite a strong separation in PPG degree, the selected and control vertices have very similar local SEDT and pore fraction neighborhoods. Within radius $20$ voxels of the death cells, both groups have local porosity zero and deep-solid fraction one, with mean SEDT approximately $9.178$. The matched critical cells themselves are typically separated by more than $100$ voxels, so this similarity is not explained by repeatedly sampling the same spatial neighborhood.

For this matched $H_2$ example, PPG degree is therefore not explained by the immediate pore fraction or signed distance environment around the critical cell. This is a negative result in the sense that no clear local geometric surrogate for graph prominence is identified.

We next examined the spatial configuration of the actual PPG neighbors of the selected vertices (\cref{fig:neighbor-distances}). In $H_1$, incident graph relations commonly connect critical cells separated by tens of voxels. Across the three selected specimens, the mean birth-cell distance to an incident neighbor ranges from approximately $46$ to $61$ voxels, the corresponding mean death-cell distance from $64$ to $84$ voxels, and the mean interval-midpoint distance from $46$ to $66$ voxels. In \texttt{374\_03\_011}, approximately $62\%$ of incident relations associated with the selected vertices have midpoint separation exceeding $50$ voxels, while approximately $16\%$ exceed $100$ voxels.

The selected $H_2$ vertices show a different spatial pattern. In \texttt{204\_011}, incident $H_2$ neighbors have mean birth-cell separation only $2.47$ voxels but mean death-cell separation $26.10$ voxels. The corresponding median distances are approximately $2.35$ and $23.94$ voxels. These related intervals are also relatively close in birth filtration value while differing more strongly in death value and persistence. A similar, although weaker, contrast between birth and death separations is observed in the selected $H_2$ vertices of the other two specimens.

\begin{figure}[ht]
\centering
\includegraphics[width=0.88\linewidth]{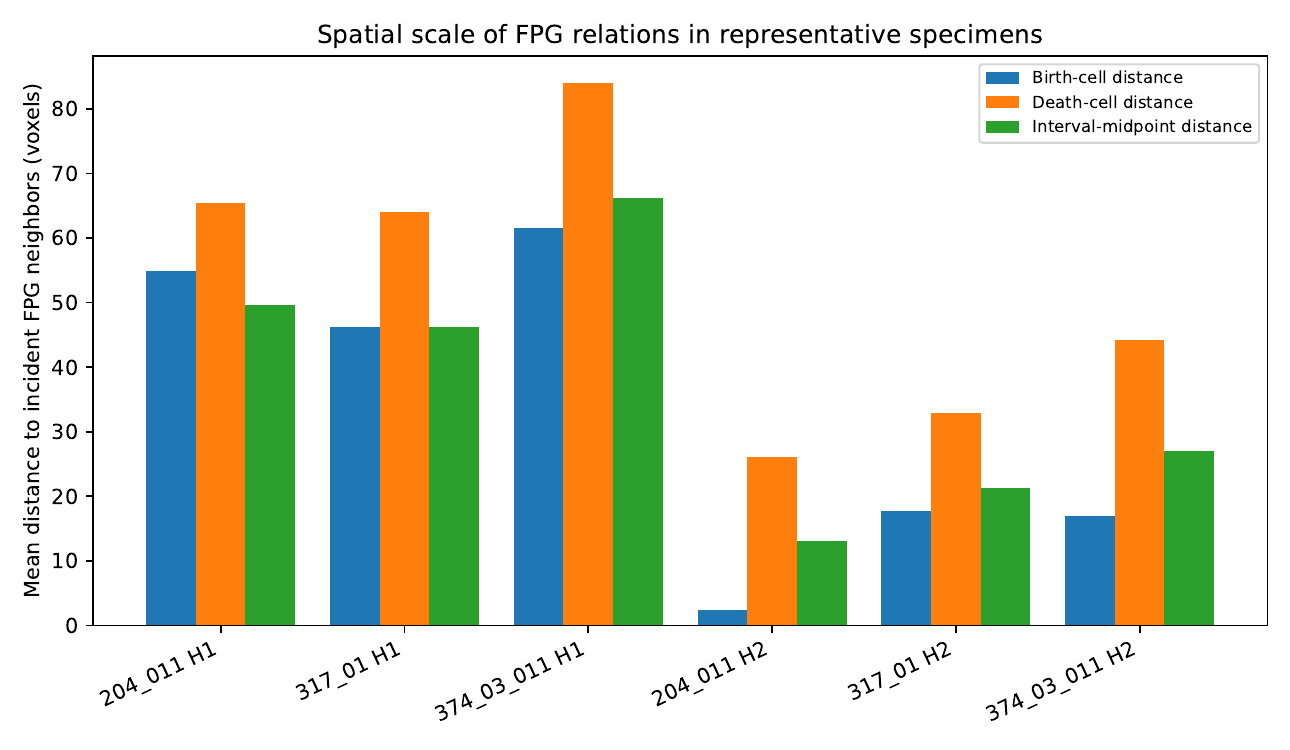}
\caption{Mean spatial distance from selected focal PPG vertices to their incident graph neighbors, measured between persistence birth cells, death cells, and midpoints between birth and death cells. PPG edges represent algebraic transition relations in the fixed reduction basis; these spatial separations should not be interpreted as physical pore channels.}
\label{fig:neighbor-distances}
\end{figure}

Taken together, the selected examples reveal two contrasting spatial patterns. The examined $H_1$ relations frequently connect persistence events whose critical cells are widely separated within the specimen. For the examined $H_2$ relations, birth cells can instead be spatially close while the corresponding death cells become substantially more separated. These observations concern the critical cells attached to algebraically related persistence intervals and are descriptive of the selected specimens. Recovering complete representative cycles or cocycles would be required to make stronger claims about the geometry or physical mechanism represented by individual PPG relations.

\paragraph{Permutation Null.}
To determine whether prediction depends on the specific placement of persistence intervals within the PPG, we performed a within-specimen permutation experiment. For each homological dimension, the PPG itself and the complete multiset of finite persistence intervals were held fixed while persistence intervals were randomly reassigned among finite graph vertices. Thus the null preserves both the barcode and the pure graph structure while destroying their observed coupling.

The observed coupling is strongly nonrandom. Persistence--in-degree correlations, edge persistence assortativity, edge persistence differences, and persistence concentration within graph components differ substantially from their permutation distributions in all three homological dimensions. For $12$ of the $13$ coupled core descriptors, the cohort mean statistic attains the smallest empirical two-sided $p$-value available from $500$ permutations ($p=1/501$).

This non-random organization does not, however, translate into a detectable improvement in family grouped prediction. For Ridge regression, persistence plus the observed coupled descriptors gives RMSE $1.0299$, compared with a permutation-null median of $0.9354$ ($p=0.721$ for a null assignment performing at least as well). When pure graph descriptors are also retained, the observed full PPG representation gives RMSE $0.8653$, compared with a null median of $0.8375$ ($p=0.711$). These results indicate that the observed persistence and graph coupling is highly structured, but the principal predictive gain over persistence in this experiment is associated with the pure PPG graph descriptors rather than with the particular assignment of persistence magnitudes to graph positions.

\paragraph{Event-Constrained Graph Null.}
To determine whether the predictive contribution of the pure PPG descriptors depends on the particular surviving classes selected by the observed graph relations, we constructed an event constrained target reassignment null. For every source vertex, its homological dimension, death event, and observed out-degree were held fixed. Each outgoing edge was then reassigned to a randomly selected vertex that was already born and remained alive after the source's death event. Consequently, the number of vertices, number of edges, edge to vertex ratio, source events, and source out-degrees were preserved, while target identity, in-degree organization, isolation, and weak component structure were allowed to vary.

The observed PPGs differ strongly from these surrogates. In $H_1$, for example, the mean observed isolated-vertex fraction is $0.618$, compared with $0.556$ under target reassignment, while the largest weak component fraction is $0.172$ compared with $0.241$. Similar differences occur in $H_2$, indicating that the observed graphs are more fragmented than graphs obtained by assigning the same outgoing relations to arbitrary eligible survivors.

This specific target organization is not required to reproduce the predictive gain of the pure graph representation. The observed $A2$ model has pooled Ridge RMSE of $0.7857$, whereas the $200$ event constrained surrogate representations have mean RMSE of $0.7571$ and $95\%$ empirical range of $[0.7375,0.7730]$. A surrogate performs at least as well as the observed representation in $199$ of the $200$ permutations ($p=200/201$). The corresponding mean surrogate Spearman correlation is $0.7831$, compared with $0.7179$ for the observed representation.

Thus the large improvement from persistence alone to the pure PPG representation cannot be attributed specifically to the identities of the surviving targets selected by the observed graph. Instead, it appears to be carried predominantly by graph information preserved under this null, including edge abundance and source side event organization. The surrogate graphs satisfy the filtration-time survivor constraint but are not asserted to be realizable PPGs arising from an actual persistence reduction.

\section{Discussion}
\label{sec:discussion}

The porous media application illustrates why retaining persistence pairing relations can be useful beyond computing the persistence barcode alone. The barcode records the birth and death of persistent classes, whereas the PPG additionally records, under the fixed reduction convention, how classes that die at a filtration transition are related to classes that remain active. In DRP-372, adding PPG summaries to persistence summaries improves held out permeability prediction in the family grouped analysis under both Ridge regression and PLSR, and a smaller improvement remains in the stricter leave one project out Ridge comparison.

The two null models make this result more specific. The persistence permutation null shows that persistence values are arranged on PPG vertices in a strongly nonrandom way: persistence and graph position are coupled far more strongly than expected under exchangeable assignment within a specimen and homological dimension. However, this exact assignment is not more predictive than its randomized counterparts. The event constrained target reassignment null reaches a similar conclusion from the graph side. The observed PPGs are more fragmented than surrogate graphs with the same edge counts, source death events, and source out degrees, yet the surrogate pure graph representations perform at least as well in the family grouped Ridge analysis. These surrogates satisfy the filtration survivor constraint but are not asserted to be realizable PPGs from an actual persistence reduction. Together, the null experiments indicate that the main predictive gain is associated with coarser graph and event information rather than with the exact persistence assignment or exact surviving targets of individual edges.

The transfer experiment further limits the strength of the predictive claim. Once geometry and persistence are both included, adding PPG descriptors does not produce a consistent improvement across held out projects. Several PPG descriptors also vary substantially between source projects, and DRP374 provides a clear example in which the graph representation differs strongly from the training domains and does not improve the geometry plus persistence model. The results therefore do not support a claim that PPG descriptors universally improve permeability prediction. Instead, they indicate that coarse PPG organization can contain predictive information beyond the selected persistence summaries while also carrying domain specific structure.

The spatial analyses provide a complementary interpretation. In the matched $H_2$ analysis for \texttt{204\_011}, large differences in PPG degree remain even when selected and control vertices have nearly identical birth and death filtration values and similar local signed distance environments. Within this matched example, graph prominence is therefore not explained by the immediate local pore fraction or SEDT environment around the critical cell. At the same time, the spatial locations of incident PPG neighbors show substantial organization: selected $H_1$ relations frequently connect critical cells separated by tens of voxels, while selected $H_2$ relations can associate intervals with nearby birth cells but more widely separated death cells.

These observations should not be given a direct transport interpretation. A PPG edge is an algebraic relation produced by the selected persistence reduction; it is not itself a fluid pathway, pore throat, or geometric connection. The present localization identifies the critical cells associated with the births and deaths of related persistence intervals, not the complete supports of the corresponding homology representatives. The spatial analysis therefore shows where algebraically related events occur, but stronger geometric or mechanistic interpretation requires recovery of the full representatives.

Previous work on porous media has demonstrated that persistent homology can characterize pore structure, percolation related scales, and multiphase configurations \cite{robins2016porous,jiang2018pore,herring2019trapping}. The present application addresses a complementary question: whether information is gained by retaining relations among persistent classes rather than summarizing their intervals independently. For the DRP-372 cohort, the answer is positive at the level of coarse graph organization but conditional at finer levels. The exact representative dependent organization is highly structured, yet the null models do not identify it as the unique source of predictive improvement.

A natural theoretical and computational direction is therefore to distinguish more clearly between representative dependent and more basis robust relational information. The companion theory paper separates persistence information, which is determined by the persistence module, from the representative dependent relations encoded by a fixed PPG construction \cite{manzanares2026PPG}. Empirically, robustness could be investigated by comparing graph summaries across controlled perturbations or deterministic reduction conventions. More invariant alternatives might also be sought at the level of transition subspaces, ranks, simultaneous death events, or other coarse event summaries rather than individual graph edges. On the application side, recovery of complete representative cycles or cocycles would permit direct comparison between selected persistence relations and geometric structures such as pore throats, loops, solid inclusions, or simulated flow fields.

The computational implementation and porous media analysis have several limitations. First, the optimized streaming implementation currently operates over $\Ftwo$. The same value cancellation argument extends to other fields whenever the selected pivot coefficient is invertible, but persistence reduction, relation projection, and event solves would require the corresponding field arithmetic.

Second, the present compact streaming implementation is optimized for a discretely ranked scalar filtration, as provided here by the quantized $16$ bit signed distance field. The mathematical ordering is not restricted to integer valued data, but unrestricted floating point input requires an additional deterministic ranking or ordering mechanism before the same compact bucket representation can be used.

Third, the PPG itself is representative dependent. The value block contraction preserves the persistence module and hence the barcode, but the individual graph relations are tied to the deterministic compression, ordering, and reduction convention used in this study. We therefore do not claim that each PPG edge is a canonical invariant of the persistence module or that individual edges are stable under arbitrary perturbations.

Fourth, DRP-372 is a heterogeneous observational collection rather than a controlled permeability experiment. Although validation groups related samples and the LOPO analysis holds out complete source projects, project sizes are imbalanced and several PPG descriptors contain substantial project specific variation. The results support predictive and structural associations but do not establish a causal relationship between any individual PPG descriptor and permeability.

Fifth, the permeability response is taken from the dataset's lattice unit system output, and the signed distance filtration is analyzed in voxel units on the standardized $256^3$ grids. The numerical relationships found here should therefore not be transferred directly to physical unit permeability measurements in datasets where voxel spacing, specimen dimensions, or simulation conventions differ.

Finally, the spatial analysis localizes persistence birth and death cells rather than complete representative cycles or cocycles. The observed spatial separations therefore concern critical cells attached to algebraically related intervals. Stronger geometric or mechanistic interpretations of individual PPG relations will require reconstruction and analysis of the corresponding representatives.

\section{Conclusion}

We developed a scalable method for computing persistence pairing graphs from large cubical filtrations and used it to examine whether graph organization among persistent classes contains information beyond the persistence intervals themselves. The computational pipeline removes same value pairs before global persistence reduction, constructs the resulting compressed complex in streaming form, and recovers PPG transitions from a single reduction through projected relations and support-restricted triangular event solves.

The method was applied successfully to all $119$ DRP-372 specimens. Each $256^3$ signed distance volume induces $135{,}005{,}697$ cubical cells before same value contraction, whereas the median compressed complex contains only $37{,}471$ generators. This reduction makes persistence and PPG extraction feasible at the scale required for the permeability study.

Under family grouped held out validation, augmenting persistence summaries with PPG information improves permeability prediction under both Ridge regression and PLSR. Under the stricter leave one project out analysis, Ridge continues to show a modest improvement when PPG information is added to persistence alone. This advantage is not systematic once geometry and persistence are already included, however, and several PPG descriptors vary substantially between source projects. The predictive contribution of the graph representation is therefore both incremental and domain dependent.

The null model analyses further refine where this contribution appears to reside. Randomly permuting persistence intervals among vertices while holding both the barcode and the graph fixed shows that the observed persistence and graph coupling is strongly nonrandom. Nevertheless, the observed coupling does not outperform randomized assignments in family grouped permeability prediction. Similarly, event constrained reassignment of edge targets produces surrogate graphs that differ substantially from the observed PPGs in isolation and component structure, yet these surrogates perform at least as well as the observed pure graph representation. Thus the predictive improvement over persistence alone cannot be attributed specifically to the exact persistence values assigned to graph positions or to the identities of the surviving targets selected by individual PPG edges. Instead, the useful signal appears to lie primarily in coarser graph and event organization preserved by these null models.

The spatial analyses provide a complementary distinction between persistence and graph organization. Specimens with many persistence intervals can nevertheless have sparse and highly isolated PPG structure, whereas specimens with fewer intervals can exhibit relatively denser relations. In an interval matched $H_2$ example, vertices with markedly different graph degree have similar local signed distance environments, while their incident graph relations exhibit distinct spatial organization. These observations concern critical cells associated with algebraically related persistence intervals and do not imply that individual PPG edges represent physical flow pathways.

Taken together, the computational, predictive, null-model, and spatial results support a more specific conclusion: coarse persistence pairing graph organization contains information beyond persistence summaries, but the predictive value is not attributable uniquely to the exact representative-dependent edge assignments and is not uniformly transferable across source domains. The next challenge is therefore not only to compute persistence pairing graphs efficiently, but to identify coarser and potentially more invariant relational summaries that retain their useful structural information while reducing sensitivity to representative choice and source domain. Recovering complete representative cycles or cocycles and relating them to pore geometry and simulated flow fields provides a complementary route toward stronger geometric and physical interpretation.

\section*{Code Availability}

The Rust implementation for computing persistence pairing graphs from cubical filtrations is available at \url{https://github.com/jhnrckmnznrs/persistence_pairing_graph}.

\bibliographystyle{plainnat}
\bibliography{references}

\end{document}